\begin{document}
\newcommand{\m}{\textnormal{mult}}
\newcommand{\ra}{\rightarrow}
\newcommand{\la}{\leftarrow}
\renewcommand{\baselinestretch}{1.1}

\theoremstyle{plain}
\newtheorem{thm}{Theorem}[section]
\newtheorem{cor}[thm]{Corollary}
\newtheorem{con}[thm]{Conjecture}
\newtheorem{cla}[thm]{Claim}
\newtheorem{lm}[thm]{Lemma}
\newtheorem{prop}[thm]{Proposition}
\newtheorem{example}[thm]{Example}

\theoremstyle{definition}
\newtheorem{dfn}[thm]{Definition}
\newtheorem{alg}[thm]{Algorithm}
\newtheorem{prob}[thm]{Problem}
\newtheorem{rem}[thm]{Remark}

\renewcommand{\baselinestretch}{1.1}

\title{\bf Extensions of Barrier Sets to Nonzero Roots of the Matching Polynomials}
\author{
Cheng Yeaw Ku
\thanks{ Department of Mathematics, National University of Singapore, Singapore 117543. E-mail: matkcy@nus.edu.sg} \and K.B. Wong \thanks{
Institute of Mathematical Sciences, University of Malaya, 50603 Kuala Lumpur, Malaysia. E-mail:
kbwong@um.edu.my.} } \maketitle

\begin{abstract}\noindent
In matching theory, barrier sets (also known as Tutte sets) have been studied extensively due to its connection to maximum matchings in a graph. In this paper, we first define $\theta$-barrier sets. Our definition of a $\theta$-barrier set is slightly different from that of a barrier set. However we show that $\theta$-barrier sets and barrier sets have similar properties. In particular, we prove a generalized Berge's Formula and give a characterization for the set of all $\theta$-special vertices in a graph.
\end{abstract}

\bigskip\noindent
{\sc keywords:} matching polynomial, Gallai-Edmonds Decomposition, barrier sets, extreme sets

\section{Introduction}

All the graphs in this paper are simple and finite.

\begin {dfn}\label {I:D1}  An $r$-\emph {matching} in a graph $G$ is a set of $r$ edges, no two of which have a vertex in common. The number of $r$-matchings in $  G$ will be denoted by $p( G,r)$. We set $p(G,0)=1$ and define the \emph {matching polynomial} of $G$ by
\begin {equation}
\mu ( G,x)=\sum_{r=0}^{\lfloor n/2\rfloor} (-1)^rp(G,r)x^{n-2r}.\notag
\end {equation}
We shall denote the multiplicity of $\theta$ as a root of $\mu(G,x)$ by $\textnormal {mult} (\theta,G)$. Let $u\in V(G)$, the graph obtained from $G$ by deleting the vertex $u$ and all edges that contain $u$ will be denoted by $G\setminus u$. Inductively if $u_1,\dots, u_k\in V(G)$, $G\setminus u_1\dots u_k=(G\setminus u_1\dots u_{k-1})\setminus u_k$. Note that the order of which vertex is being deleted first is not important, that is, if $i_1,\dots, i_k$ is a permutation  of $1,\dots, k$, we have  $G\setminus u_1\dots u_k= G\setminus u_{1_1}\dots u_{i_k}$. Furthermore if  $X=\{u_1,\dots, u_k\}$, $G\setminus X=G\setminus u_1\dots u_k$.
\end {dfn}

The followings  are properties of $\mu (G,x)$.

\begin {thm}\label {I:T2} \textnormal {(Theorem 1.1 on p. 2 of \cite {G0})}
\begin {itemize}
\item [(a)] $\mu (  G\cup  H,x)=\mu (  G,x)\mu (  H,x)$ where $  G$ and $  H$ are disjoint graphs,
\item [(b)] $\mu (  G,x)=\mu (  G\setminus e,x)-\mu (  G\setminus uv, x)$ if $e=\{u,v\}$ is an edge of $  G$,
\item [(c)] $\mu (  G,x)=x\mu (  G\setminus u,x)-\sum_{i\sim u} \mu (  G\setminus ui,x)$ where $i\sim u$ means $i$ is adjacent to $u$,
\item [(d)] $\displaystyle \frac {d}{dx} \mu (  G,x)=\sum_{i\in V(  G)} \mu (  G\setminus i,x)$ where $V(  G)$ is the vertex set of $  G$.
\end {itemize}
\end {thm}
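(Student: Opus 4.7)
My plan is to prove each of (a)--(d) by first establishing a combinatorial identity on the matching counts $p(\cdot,r)$ and then translating it into a polynomial identity via the definition $\mu(G,x)=\sum_r (-1)^r p(G,r)\, x^{n-2r}$. Throughout, let $n=|V(G)|$.

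For (a), since $G$ and $H$ are vertex-disjoint, every $r$-matching of $G\cup H$ decomposes uniquely as an $i$-matching of $G$ together with an $(r-i)$-matching of $H$; this gives the convolution $p(G\cup H,r)=\sum_{i=0}^{r} p(G,i)\,p(H,r-i)$. Expanding the product $\mu(G,x)\mu(H,x)$ and collecting the coefficient of $x^{(|V(G)|+|V(H)|)-2r}$, using $(-1)^i(-1)^{r-i}=(-1)^r$, reproduces $\mu(G\cup H,x)$. For (b), I would partition the $r$-matchings of $G$ by whether they contain the edge $e=\{u,v\}$: those avoiding $e$ are precisely the $r$-matchings of $G\setminus e$, while those containing $e$ biject (by deleting $e$) with the $(r-1)$-matchings of $G\setminus uv$. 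This yields $p(G,r)=p(G\setminus e,r)+p(G\setminus uv,r-1)$, and after reindexing $r\mapsto r-1$ in the second sum and noting $|V(G\setminus uv)|=n-2$, the exponents of $x$ line up to give (b). Part (c) is analogous: partition $r$-matchings of $G$ according to whether the vertex $u$ is saturated, with uncovered $u$ contributing $r$-matchings of $G\setminus u$ (which has $n-1$ vertices) and covered $u$ contributing, via deletion of the unique matching edge at $u$, $(r-1)$-matchings of $G\setminus ui$ summed over neighbors $i$ of $u$.

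For (d), I would use a double count. Termwise differentiation of $\mu(G,x)$ produces $(-1)^r(n-2r)p(G,r)$ as the coefficient of $x^{n-1-2r}$. On the other side, $p(G\setminus i,r)$ counts the $r$-matchings of $G$ that miss vertex $i$, so $\sum_{i\in V(G)} p(G\setminus i,r)$ counts pairs $(M,i)$ in which $M$ is an $r$-matching of $G$ and $i$ is an uncovered vertex of $M$; since every $r$-matching leaves exactly $n-2r$ vertices uncovered, this sum equals $(n-2r)p(G,r)$, matching the derivative coefficient for coefficient. The main obstacle throughout is purely clerical: keeping track of the three different vertex counts ($n$, $n-1$, $n-2$) and the corresponding shifts in the exponent of $x$ so that the coefficients align across all four identities. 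The substantive combinatorial content in each part is a single bijection, and once the indexing conventions are pinned down the four identities fall out uniformly.
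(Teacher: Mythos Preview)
Your argument is correct in every part: the convolution for (a), the edge-containment partition for (b), the vertex-saturation partition for (c), and the double count of pairs (matching, uncovered vertex) for (d) are all valid, and your bookkeeping of the vertex counts $n$, $n-1$, $n-2$ and the sign/index shifts is right. Note, however, that the paper does not actually prove this theorem; it is stated with a citation to Godsil's \emph{Algebraic Combinatorics} and used as a black box, so there is no ``paper's own proof'' to compare against---your write-up is essentially the standard proof one would find in that reference.
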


It is well known that all roots of $\mu(G,x)$ are real. Throughout, let $\theta$ be a real number and $\textnormal {mult} (\theta, G)$ denote the multiplicity of $\theta$ as a root of $\mu(G,x)$. In particular, $\textnormal {mult} (\theta, G)=0$ if and only if $\theta$ is not a root of $\mu(G,x)$. By Theorem 5.3 on p. 29 and Theorem 1.1 on p. 96 of \cite {G0}, one can easily deduce the following lemma.

\begin {lm}\label {P:L2} Let $  G$ be a graph  and $u\in V(  G)$. Then
\begin {equation}
\textnormal {mult} (\theta,   G)-1\leq \textnormal {mult} (\theta,   G\setminus u)\leq \textnormal {mult} (\theta,  G)+1.\notag
\end {equation}
\end {lm}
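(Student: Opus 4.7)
My plan is to derive both inequalities from the classical interlacing property of matching polynomials. Specifically, Theorem 5.3 on p.~29 of \cite{G0} (the interlacing theorem for the matching polynomial under vertex deletion), combined with the real-rootedness guaranteed by Theorem 1.1 on p.~96 of \cite{G0}, tells us that if we list the roots of $\mu(G,x)$ as $\theta_1\geq \theta_2\geq \cdots\geq \theta_n$ and the roots of $\mu(G\setminus u,x)$ as $\eta_1\geq \eta_2\geq \cdots\geq \eta_{n-1}$ (each with multiplicity, where $n=|V(G)|$), then
\[
\theta_i \;\geq\; \eta_i \;\geq\; \theta_{i+1} \qquad\text{for all } i=1,\dots,n-1.
\]
Both inequalities in the lemma are consequences of propagating this interlacing through a block of repeated roots equal to $\theta$.

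For the lower bound, suppose $\m(\theta,G)=k$, and let $j$ be the index with $\theta_{j+1}=\theta_{j+2}=\cdots=\theta_{j+k}=\theta$. Applying the interlacing inequality $\theta_{i}\geq \eta_i\geq \theta_{i+1}$ for each $i\in\{j+1,\dots,j+k-1\}$, the outer terms both equal $\theta$, forcing $\eta_{j+1}=\eta_{j+2}=\cdots=\eta_{j+k-1}=\theta$. Hence $\m(\theta,G\setminus u)\geq k-1$, which is the desired lower bound.

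For the upper bound, suppose instead that $\m(\theta,G\setminus u)=m$, with $\eta_{i+1}=\cdots=\eta_{i+m}=\theta$. Now apply interlacing in the other direction: for each $s\in\{i+2,\dots,i+m\}$ we have $\theta_s\geq \eta_s=\theta$, while from $s-1$ we also have $\theta\geq \eta_{s-1}\geq$ nothing useful by itself; however, shifting indices, $\theta_s\geq \eta_s=\theta$ and $\eta_{s-1}=\theta\geq \theta_s$, so $\theta_s=\theta$ for all such $s$. This gives at least $m-1$ of the $\theta_i$'s equal to $\theta$, i.e.\ $\m(\theta,G)\geq m-1$, which rearranges to $\m(\theta,G\setminus u)\leq \m(\theta,G)+1$.

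There is no substantive obstacle here; the only thing to be careful about is the correct bookkeeping of which interlacing inequalities to apply to a block of equal roots, and the observation that a block of length $k$ of equal $\theta_i$'s forces a block of length at least $k-1$ of equal $\eta_i$'s (and vice versa). The nontrivial ingredients are entirely black-boxed into the cited results: real-rootedness (Theorem~1.1 on p.~96 of \cite{G0}) and the interlacing statement (Theorem~5.3 on p.~29 of \cite{G0}).
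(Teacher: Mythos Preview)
Your proposal is correct and follows precisely the route the paper indicates: the paper does not actually write out a proof but merely states that the lemma ``can be easily deduced'' from Theorem~5.3 on p.~29 and Theorem~1.1 on p.~96 of \cite{G0}, which are exactly the interlacing theorem and the real-rootedness result you invoke. Your argument supplies the bookkeeping the paper omits, and the index-chasing (a block of $k$ equal $\theta_i$'s forcing at least $k-1$ equal $\eta_i$'s, and vice versa) is handled correctly.
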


As a consequence of Lemma \ref  {P:L2}, we can classify the vertices in a graph with respect to $\theta$ as follows:

\begin {dfn}\label {P:D3}\textnormal {(see \cite [Section 3]{G})} For any $u\in V(  G)$,
\begin {itemize}
\item [(a)] $u$ is $\theta$-\emph {essential} if $\textnormal {mult} (\theta,   G\setminus u)=\textnormal {mult} (\theta,   G)-1$,
\item [(b)] $u$ is $\theta$-\emph {neutral} if $\textnormal {mult} (\theta,   G\setminus u)=\textnormal {mult} (\theta,   G)$,
\item [(c)] $u$ is $\theta$-\emph {positive} if $\textnormal {mult} (\theta,   G\setminus u)=\textnormal {mult} (\theta,   G)+1$.
\end {itemize}
Furthermore if $u$ is not $\theta$-essential but it is adjacent to some $\theta$-essential vertex, we say $u$ is $\theta$-special.
\end {dfn}

It turns out that $\theta$-special vertices play an important role in the Gallai-Edmonds Decomposition of a graph (see \cite {KC}). One of our main result is a characterization of the set of these vertices in terms of $\theta$-barriers.

Note that if $\textnormal {mult} (\theta,   G)=0$ then for any $u\in V(G)$, $u$ is either $\theta$-neutral or $\theta$-positive and no vertices in $G$ can be $\theta$-special. By Corollary 4.3 of \cite {G}, a $\theta$-special vertex is $\theta$-positive. Therefore
\begin {equation}
V(G)=D_{\theta}(G)\cup A_{\theta}(G)\cup P_{\theta}(G)\cup N_{\theta}(G),\notag
\end {equation}
where
\begin {itemize}
\item [] $D_{\theta}(G)$ is the set of all $\theta$-essential vertices in $G$,
\item [] $A_{\theta}(G)$ is the set of all $\theta$-special vertices in $G$,
\item [] $N_{\theta}(G)$ is the set of all $\theta$-neutral vertices in $G$,
\item [] $P_{\theta}(G)=Q_{\theta}(G)\setminus A_{\theta}(G)$, where  $Q_{\theta}(G)$ is the set of all $\theta$-positive vertices in $G$,
\end {itemize}
is a partition of $V(G)$.
Note that there is no $0$-neutral vertices. So $N_0(G)=\varnothing$ and $V(G)=D_{0}(G)\cup A_{0}(G)\cup P_{0}(G)$.

\begin {dfn}\label {P:D4}\textnormal {(see \cite [Section 3]{G})} A graph $G$ is said to be $\theta$-critical if all vertices in $G$ are $\theta$-essential and $\textnormal {mult} (\theta, G)=1$.
\end {dfn}

The Gallai-Edmonds Structure Theorem describes a certain canonical decomposition of $V(G)$ with respect to the zero root of $\mu (G,x)$. In \cite {KC}, Chen and Ku proved the Gallai-Edmonds Structure Theorem for graph with any root $\theta$.

\begin {thm}\label {P:T5}\textnormal {(Theorem 1.5 of \cite {KC})} Let $G$ be a graph with $\theta$ a root of $\mu (G,x)$. If $u\in A_{\theta} (G)$ then
\begin {itemize}
\item [(i)] $D_{\theta}(G\setminus u)=D_{\theta}(G)$,
\item [(ii)] $P_{\theta}(G\setminus u)=P_{\theta}(G)$,
\item [(iii)] $N_{\theta}(G\setminus u)=N_{\theta}(G)$,
\item [(iv)] $A_{\theta}(G\setminus u)=A_{\theta}(G)\setminus \{u\}$.
\end {itemize}
\end {thm}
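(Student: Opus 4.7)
The plan is to reduce all four items to a single key claim about the behavior of $u$ after deleting another vertex, and then prove that claim from the structural hypothesis that $u$ has a $\theta$-essential neighbor. By Corollary~4.3 of \cite{G}, the $\theta$-special vertex $u$ is $\theta$-positive, so $\text{mult}(\theta,G\setminus u)=\text{mult}(\theta,G)+1\geq 1$ and the classes $D_\theta,A_\theta,P_\theta,N_\theta$ are all well-defined for $G\setminus u$. For any $v\in V(G)\setminus\{u\}$, the type of $v$ in $G$ (resp.\ in $G\setminus u$) is encoded by $\text{mult}(\theta,G\setminus v)-\text{mult}(\theta,G)$ (resp.\ $\text{mult}(\theta,G\setminus u\setminus v)-\text{mult}(\theta,G\setminus u)$), each lying in $\{-1,0,1\}$ by Lemma~\ref{P:L2}. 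Using the identity $\text{mult}(\theta,G\setminus u)-\text{mult}(\theta,G)=1$, these two integers coincide if and only if
\begin{equation*}
\text{mult}(\theta,G\setminus u\setminus v)=\text{mult}(\theta,G\setminus v)+1,
\end{equation*}
i.e., if and only if $u$ is $\theta$-positive in $G\setminus v$. So the entire theorem reduces to the following \emph{key claim}: for every $v\in V(G)\setminus\{u\}$, $u$ remains $\theta$-positive in $G\setminus v$.

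Granting the key claim, items (i) and (iii) are immediate by separately checking $v\in D_\theta(G)$ and $v\in N_\theta(G)$ and tracking the multiplicity shifts. For (ii), once (i) is known, a vertex $v\in Q_\theta(G)\setminus\{u\}$ has a $\theta$-essential neighbor in $G$ if and only if it has one in $G\setminus u$ (since $D_\theta(G)=D_\theta(G\setminus u)$ and $u\notin D_\theta(G)$), which consistently separates $A_\theta$ from $P_\theta$ in both graphs. Item (iv) is then purely combinatorial: $V(G\setminus u)$ is partitioned into the four type classes, so once three agree the fourth must too, yielding $A_\theta(G\setminus u)=A_\theta(G)\setminus\{u\}$.

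The main obstacle is the key claim. A direct appeal to Lemma~\ref{P:L2} only pins $\text{mult}(\theta,G\setminus u\setminus v)$ into the interval $[\text{mult}(\theta,G\setminus v)-1,\text{mult}(\theta,G\setminus v)+1]$; the additional $+1$ must be extracted from the hypothesis that $u$ has a $\theta$-essential neighbor $w\in D_\theta(G)$. I would proceed by induction on $|V(G)|$, applying Theorem~\ref{I:T2}(c) at the vertex $u$ inside $G\setminus v$,
\begin{equation*}
\mu(G\setminus v,x)=x\,\mu(G\setminus v\setminus u,x)-\sum_{j\sim u,\;j\neq v}\mu(G\setminus v\setminus u\setminus j,x),
\end{equation*}
and tracking the $(x-\theta)$-adic valuation of each term: the summand at $j=w$ (or at a substitute $\theta$-essential neighbor of $u$ when $v=w$) contributes the dominant multiplicity, forcing $\text{mult}(\theta,G\setminus v\setminus u)=\text{mult}(\theta,G\setminus v)+1$. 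The most delicate sub-case will be $v=w$, where one must exhibit another $\theta$-essential neighbor of $u$ in $G\setminus w$; this is where the $\theta$-critical structure of the component of $G[D_\theta(G)]$ containing $w$, as developed in \cite{KC}, should come into play, with the induction hypothesis supplying that the neighboring critical structure persists under deletion of $w$.
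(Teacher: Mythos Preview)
The paper does not prove this theorem at all; it is quoted verbatim as Theorem~1.5 of \cite{KC} and used as a black box throughout. So there is no ``paper's own proof'' to compare against, and what you are really attempting is to reprove the main structural result of \cite{KC}.

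Your reduction to the \emph{key claim} --- that $u$ stays $\theta$-positive in $G\setminus v$ for every $v\neq u$ --- is correct and is exactly the standard way to read the theorem: once that claim holds, the equalities
\[
\text{mult}(\theta,G\setminus u\setminus v)-\text{mult}(\theta,G\setminus u)=\text{mult}(\theta,G\setminus v)-\text{mult}(\theta,G)
\]
line up the essential/neutral/positive trichotomy in $G$ and $G\setminus u$, and the $A_\theta$/$P_\theta$ split follows from $D_\theta(G\setminus u)=D_\theta(G)$ together with $u\notin D_\theta(G)$, just as you say. This part is fine.

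The gap is in your proposed proof of the key claim. Two concrete issues:
\begin{itemize}
\item The valuation argument from the recursion at $u$ does not work as stated. Knowing that one summand $\mu(G\setminus v\setminus u\setminus w,x)$ has a certain $(x-\theta)$-adic valuation does not, by itself, force the valuation of $\mu(G\setminus v\setminus u,x)$: the other summands can have the same or lower valuation and cancel. You would need to control \emph{all} the terms simultaneously, and nothing in the argument as written does that.
\item The case $v=w$ is precisely the hard case, and you defer it to ``the $\theta$-critical structure \dots\ as developed in \cite{KC}''. But that structure (in particular the persistence of $\theta$-essential neighbors after deleting $w$) is established in \cite{KC} \emph{as a consequence} of the very Stability Lemma you are trying to prove. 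Invoking it here is circular.
\end{itemize}
In \cite{KC} the key claim is obtained not through the three-term recursion but via Godsil's path-tree machinery and a delicate analysis of $\theta$-essential paths; that is the missing idea. Your proposal correctly isolates the crux but does not supply an independent argument for it.
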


\begin {thm}\label {P:T6}\textnormal {(Theorem 1.7 of \cite {KC})} If $G$ is connected and every vertex of $G$ is $\theta$-essential then $\textnormal {mult} (\theta,   G)=1$.
\end {thm}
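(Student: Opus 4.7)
I would proceed by strong induction on $|V(G)|$. The base case $|V(G)|=1$ is immediate: the unique vertex $u$ satisfies $\mu(G,x)=x$ and $\mu(G\setminus u,x)=1$, so $\theta$-essentiality of $u$ forces $\textnormal{mult}(\theta,G)=1$ (and $\theta=0$). For the inductive step, let $m=\textnormal{mult}(\theta,G)$ and suppose for contradiction that $m\ge 2$. Since every vertex of $G$ is $\theta$-essential, $\textnormal{mult}(\theta,G\setminus i)=m-1\ge 1$ for every $i\in V(G)$, and in particular $\mu(G\setminus i,\theta)=0$.

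The plan is then to pick a vertex $u$ carefully and decompose $G\setminus u$ into its connected components $C_1,\dots,C_r$; by Theorem \ref{I:T2}(a) the multiplicities add, so $m-1=\sum_j\textnormal{mult}(\theta,C_j)$ and at least one $C_j$ has $\theta$ as a root of its matching polynomial. I would try to produce, for a judicious choice of $u$, a component $C=C_j$ satisfying (a) $\textnormal{mult}(\theta,C)\ge 1$ and (b) every vertex of $C$ is $\theta$-essential inside $C$. The inductive hypothesis applied to the connected graph $C$ would then force $\textnormal{mult}(\theta,C)=1$; combining this with the remaining components of $G\setminus u$ and with the identities below should yield $m=1$, contradicting $m\ge 2$.

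The tools I would combine are: Lemma \ref{P:L2} applied iteratively, giving $\textnormal{mult}(\theta,G\setminus uv)\in\{m-2,m-1,m\}$ (one bound from $G\setminus u$, another from $G\setminus v$); Theorem \ref{I:T2}(c) at $u$, which at $x=\theta$ reduces to $\sum_{i\sim u}\mu(G\setminus ui,\theta)=0$ since both $\mu(G,\theta)$ and $\mu(G\setminus u,\theta)$ vanish; and Theorem \ref{I:T2}(d), which, after writing $\mu(G,x)=(x-\theta)^m p(x)$ and $\mu(G\setminus i,x)=(x-\theta)^{m-1}p_i(x)$ with $p(\theta),p_i(\theta)\ne 0$, collapses to the scalar identity $m\,p(\theta)=\sum_{i\in V(G)}p_i(\theta)$. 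Connectedness of $G$ enters through the fact that $u$ must have a neighbour in each component of $G\setminus u$, linking the sum $\sum_{i\sim u}\mu(G\setminus ui,\theta)=0$ to the component decomposition of $\mu(G\setminus u,x)$.

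The main obstacle is clause (b): $\theta$-essentiality of $v$ in $G$ does not automatically descend to $\theta$-essentiality of $v$ inside $C$, because Lemma \ref{P:L2} only sandwiches $\textnormal{mult}(\theta,C\setminus v)$ between $\textnormal{mult}(\theta,C)-1$ and $\textnormal{mult}(\theta,C)+1$ without forcing equality at the lower end. I expect to overcome this by choosing $u$ so that $G\setminus u$ has exactly one component $C$ on which $\theta$ is a root of the matching polynomial (so that $C$ carries the full multiplicity $m-1$ and every other $C_j$ is $\theta$-free), and then using Theorem \ref{I:T2}(c) at $u$ together with the multiplicative decomposition $\mu(G\setminus u,x)=\prod_j\mu(C_j,x)$ to pin down $\textnormal{mult}(\theta,C\setminus v)=m-2=\textnormal{mult}(\theta,C)-1$ for every $v\in V(C)$. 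If no such $u$ exists, a separate argument handling the genuinely multi-component case---most likely exploiting positivity of the residues $p_i(\theta)/p(\theta)$ forced by connectedness via a path-tree type consideration---would be required.
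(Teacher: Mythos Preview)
The paper does not give its own proof of this theorem: it is quoted verbatim as Theorem 1.7 of \cite{KC} and used as a black box. So there is no paper proof to compare against, only the original Chen--Ku argument.

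Your proposal has a genuine gap precisely where you flag it. You need, for some $\theta$-essential vertex $u$, a component $C$ of $G\setminus u$ with $\textnormal{mult}(\theta,C)\ge 1$ in which every vertex is again $\theta$-essential. Nothing in your outline establishes either half of this. First, you give no reason why a vertex $u$ should exist for which exactly one component of $G\setminus u$ carries the root $\theta$; for a generic $\theta$-essential $u$ the multiplicity $m-1$ can split across several components. Second, even granting such a $u$, your proposed mechanism for forcing $\textnormal{mult}(\theta,C\setminus v)=m-2$ does not work: Theorem \ref{I:T2}(c) at $u$ gives the single relation $\sum_{i\sim u}\mu(G\setminus ui,\theta)=0$, which constrains only neighbours of $u$ and only in the aggregate, whereas you need a statement about \emph{every} $v\in V(C)$ individually. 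The derivative identity from Theorem \ref{I:T2}(d) collapses to $0=0$ once $m\ge 2$, so it adds nothing at this stage.

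The honest situation is that your fallback phrase ``path-tree type consideration'' is not a side case but the heart of the matter. The Chen--Ku/Godsil proof does not proceed by the component-splitting induction you sketch; it relies on the path-tree identity $\mu(G\setminus u,x)/\mu(G,x)=\mu(T(G,u)\setminus u,x)/\mu(T(G,u),x)$ and on sign/interlacing properties of $\mu(G\setminus P,x)$ along paths $P$, which is what actually forces essentiality to propagate in the required way. If you want a self-contained argument, that is the machinery you would need to develop; the inductive scheme as written does not close.
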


By Theorem \ref {P:T5} and Theorem \ref {P:T6}, it is not hard to deduce the following whose proof is omitted. For convenience, a connected component will be called a component.

\begin {cor}\label {P:C7}\
\begin {itemize}
\item [(i)]  $A_{\theta}(G\setminus A_{\theta}(G))=\varnothing$, $D_{\theta}(G\setminus A_{\theta}(G))=D_{\theta}(G)$, $P_{\theta}(G\setminus A_{\theta}(G))=P_{\theta}(G)$, and $N_{\theta}(G\setminus A_{\theta}(G))=N_{\theta}(G)$.
\item [(ii)] $G\setminus A_{\theta}(G)$ has exactly $\vert A_{\theta}(G)\vert+\textnormal {mult} (\theta, G)$ $\theta$-critical components.
\item [(iii)] If $H$ is a component of $G\setminus A_{\theta}(G)$ then either $H$ is $\theta$-critical or $\textnormal {mult} (\theta, H)=0$.
\item [(iv)] The subgraph induced by $D_{\theta}(G)$ consists of all the  $\theta$-critical components in $G\setminus A_{\theta}(G)$.
\end {itemize}
\end {cor}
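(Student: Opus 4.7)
My plan is to iterate Theorem \ref{P:T5} to reduce to the case $A_\theta(G)=\varnothing$, and then analyse the components of $G':=G\setminus A_\theta(G)$ via the multiplicativity $\mu(G',x)=\prod_H \mu(H,x)$ from Theorem \ref{I:T2}(a). Fix any enumeration $A_\theta(G)=\{u_1,\dots,u_k\}$ and set $G_0=G$, $G_i=G_{i-1}\setminus u_i$. Theorem \ref{P:T5}(iv) makes the induction legitimate, since $u_{i+1}\in A_\theta(G_i)$, and parts (i)--(iv) of that theorem applied at each step yield $D_\theta(G_k)=D_\theta(G)$, $P_\theta(G_k)=P_\theta(G)$, $N_\theta(G_k)=N_\theta(G)$, and $A_\theta(G_k)=\varnothing$ with $G_k=G'$, establishing (i).

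Next, for a component $H$ of $G'$ and any $v\in V(H)$, multiplicativity yields
\begin{equation}
\textnormal{mult}(\theta,G'\setminus v)-\textnormal{mult}(\theta,G')=\textnormal{mult}(\theta,H\setminus v)-\textnormal{mult}(\theta,H),\notag
\end{equation}
so the $\theta$-type of $v$ in $G'$ coincides with its $\theta$-type in $H$. Since $A_\theta(G')=\varnothing$ by (i), no vertex of $H$ is $\theta$-special in $H$ either. Consequently, if $H$ contains even one $\theta$-essential vertex, then the connectedness of $H$ forces every vertex of $H$ to be $\theta$-essential (a non-essential neighbour of an essential vertex would otherwise be $\theta$-special), and Theorem \ref{P:T6} gives $\textnormal{mult}(\theta,H)=1$, i.e., $H$ is $\theta$-critical. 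If $H$ has no $\theta$-essential vertex, then $\textnormal{mult}(\theta,H\setminus v)\ge m:=\textnormal{mult}(\theta,H)$ for every $v\in V(H)$; combining this with $\mu'(H,x)=\sum_v \mu(H\setminus v,x)$ from Theorem \ref{I:T2}(d) forces $(x-\theta)^m\mid\mu'(H,x)$, which collides with the fact that $\theta$ is a root of $\mu'(H,x)$ of multiplicity exactly $m-1$ when $m\ge 1$. Hence $m=0$, which proves (iii). Statement (iv) is then immediate: $D_\theta(G)=D_\theta(G')$ by (i), and by (iii) this is precisely the disjoint union of the vertex sets of the $\theta$-critical components.

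For (ii), each $u_i$ is $\theta$-positive in $G_{i-1}$ (every $\theta$-special vertex is $\theta$-positive, as noted just before Definition \ref{P:D4}), so $\textnormal{mult}(\theta,G_i)=\textnormal{mult}(\theta,G_{i-1})+1$ at each step, yielding $\textnormal{mult}(\theta,G')=\textnormal{mult}(\theta,G)+|A_\theta(G)|$. On the other hand, Theorem \ref{I:T2}(a) together with (iii) gives $\textnormal{mult}(\theta,G')=\sum_H \textnormal{mult}(\theta,H)$, where each summand is $1$ if $H$ is $\theta$-critical and $0$ otherwise. Equating the two expressions counts the $\theta$-critical components, proving (ii). The most delicate point is the ``no-essential-vertex'' case of (iii); everything else is routine bookkeeping with Theorem \ref{P:T5}, so the main obstacle is exploiting Theorem \ref{I:T2}(d) to certify that a positive multiplicity of $\theta$ in $\mu(H,x)$ is always witnessed by some $\theta$-essential vertex.
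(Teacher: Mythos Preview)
Your proof is correct and follows exactly the route the paper indicates (the paper omits its own proof, merely citing Theorems~\ref{P:T5} and~\ref{P:T6}). The one step that goes slightly beyond those two citations is your use of Theorem~\ref{I:T2}(d) to show that $\textnormal{mult}(\theta,H)>0$ forces the existence of a $\theta$-essential vertex; this is the standard argument and is unavoidable here, so there is nothing to object to.
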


 Let $G$ be a graph. The number of odd components in $G$ is denoted by $c_{odd} (G)$.  Recall the following famous Berge's Formula.
\begin {thm}\label {P:T8} $\textnormal {mult} (0,G)=\max_{X\subseteq V(G)} c_{odd} (G\setminus X)-\vert X\vert$.
\end {thm}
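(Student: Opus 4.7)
The plan is to prove the formula by a two-sided argument: an inequality valid for every $X \subseteq V(G)$, and then a matching equality achieved by the specific choice $X = A_0(G)$ supplied by the Gallai--Edmonds framework of Corollary \ref{P:C7}.

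For the inequality $\textnormal{mult}(0,G) \geq c_{odd}(G\setminus X) - |X|$, I would first iterate Lemma \ref{P:L2} over the vertices of $X$ to obtain $\textnormal{mult}(0,G\setminus X) \leq \textnormal{mult}(0,G) + |X|$, i.e.\ $\textnormal{mult}(0,G) \geq \textnormal{mult}(0,G\setminus X) - |X|$. Then I would use Theorem \ref{I:T2}(a) to factor $\mu(G\setminus X, x)$ as the product of $\mu(H,x)$ over the components $H$ of $G\setminus X$, giving $\textnormal{mult}(0,G\setminus X) = \sum_H \textnormal{mult}(0,H)$. The final observation is that if $H$ has an odd number of vertices $n$, then every exponent $n-2r$ appearing in $\mu(H,x)=\sum_r(-1)^r p(H,r)x^{n-2r}$ is positive, so $x \mid \mu(H,x)$ and $\textnormal{mult}(0,H) \geq 1$. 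Hence $\textnormal{mult}(0,G\setminus X) \geq c_{odd}(G\setminus X)$, and the inequality follows.

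For the matching equality, I would take $X = A_0(G)$. Corollary \ref{P:C7}(ii) tells us that $G\setminus A_0(G)$ has exactly $|A_0(G)| + \textnormal{mult}(0,G)$ $0$-critical components, and Corollary \ref{P:C7}(iii) says every remaining component $H$ satisfies $\textnormal{mult}(0,H)=0$. The core step is then to identify the $0$-critical components with the \emph{odd} components of $G\setminus A_0(G)$: a component $H$ with $\textnormal{mult}(0,H)=0$ cannot have an odd number of vertices by the parity argument above, so non-$0$-critical components are all even; conversely, if $H$ is $0$-critical with $n$ vertices and $n$ were even, then for any $u\in V(H)$, $H\setminus u$ would have an odd number of vertices, forcing $\textnormal{mult}(0,H\setminus u) \geq 1$, but $0$-essentiality of $u$ together with $\textnormal{mult}(0,H)=1$ gives $\textnormal{mult}(0,H\setminus u)=0$, a contradiction. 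Therefore every $0$-critical component is odd, so $c_{odd}(G\setminus A_0(G)) = |A_0(G)| + \textnormal{mult}(0,G)$, and $c_{odd}(G\setminus X) - |X| = \textnormal{mult}(0,G)$ for this $X$.

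The main obstacle is the parity claim that a $0$-critical component must be odd. The argument is short once one recalls that $\mu$ of an odd graph is divisible by $x$, but it is the only place where we mix the definitional properties of $0$-critical graphs with the structure of the matching polynomial; without it the count from Corollary \ref{P:C7}(ii) cannot be converted into a count of odd components. Once this parity fact is in hand, both directions of Berge's formula follow immediately from the ingredients already assembled in the excerpt.
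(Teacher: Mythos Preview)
The paper does not supply a proof of Theorem~\ref{P:T8}; it is simply recalled as the classical Berge Formula, to be used later in Propositions~\ref{BP:P3} and~\ref{BP:P3b}. Your argument is nonetheless correct, and in fact it mirrors exactly the strategy the paper uses for its Generalized Berge Formula (Theorem~\ref{B:T1}): the inequality $c_{odd}(G\setminus X)\le |X|+\m(0,G)$ via Lemma~\ref{P:L2} and the componentwise lower bound, followed by equality at $X=A_0(G)$ via Corollary~\ref{P:C7}(ii). The one extra ingredient you supply, not needed for Theorem~\ref{B:T1}, is the parity argument identifying the $0$-critical components of $G\setminus A_0(G)$ with its odd components; this is precisely what converts $c_0$ into $c_{odd}$ and is handled cleanly by your observation that $x\mid\mu(H,x)$ whenever $|V(H)|$ is odd. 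So your proof is sound and is the natural specialization of the paper's own method to $\theta=0$.
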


\begin {dfn}\label {P:D9} Motivated by the Berge's Formula, a \emph {barrier set} is defined to be a set $X\subseteq V(G)$ for which $\textnormal {mult} (0,G)=c_{odd} (G\setminus X)-\vert X\vert$. An \emph {extreme} set is defined to be the set for which $\textnormal {mult} (0,G\setminus X)=\textnormal {mult} (0,G)+\vert X\vert$.
\end {dfn}

Properties of extreme and barrier sets can be found in \cite [Section 3.3]{Lo}. In fact a barrier set is an extreme set. An extreme set is not necessary a barrier set, but it can be shown that an extreme set is contained in some barrier set. In general the union or intersection of two barrier sets is not a barrier set. However it can be shown that the intersection of two (inclusionwise) maximal barriers set is a barrier set. $A_0(G)$ is a barrier and extreme set. It can be shown that $A_0(G)$ is in fact the intersection of all the maximal barrier sets in $G$. Here we extend this fact to $A_{\theta}(G)$:

\begin {thm} Suppose $N_{\theta}(G)=\varnothing$. Then $A_{\theta}(G)$ is the intersection of all maximal $\theta$-barrier sets in $G$.
\end {thm}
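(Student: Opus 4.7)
The plan is to establish the theorem via two inclusions. The starting observation is that $A_\theta(G)$ is itself a $\theta$-barrier set: by Corollary \ref{P:C7}(ii), the graph $G \setminus A_\theta(G)$ has exactly $|A_\theta(G)| + \textnormal{mult}(\theta, G)$ $\theta$-critical components, so $A_\theta(G)$ attains equality in the generalized Berge formula established earlier in the paper. In particular, $A_\theta(G)$ sits inside some maximal $\theta$-barrier, so the intersection in question is well-defined.

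For the inclusion $A_\theta(G) \subseteq Y$ for every maximal $\theta$-barrier $Y$, I would prove the key lemma that for any $\theta$-barrier $Y$, the set $Y \cup A_\theta(G)$ is again a $\theta$-barrier; the maximality of $Y$ then forces $A_\theta(G) \subseteq Y$. To prove the lemma, I would exploit the decomposition $G \setminus (Y \cup A_\theta(G)) = (G \setminus A_\theta(G)) \setminus (Y \setminus A_\theta(G))$ together with Corollary \ref{P:C7}(iii)--(iv): every component of $G \setminus A_\theta(G)$ is either $\theta$-critical or has matching polynomial multiplicity $0$ at $\theta$, and the $\theta$-critical components exhaust the subgraph induced on $D_\theta(G)$. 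The hypothesis $N_\theta(G) = \varnothing$, combined with Corollary \ref{P:C7}(i), forces $Y \setminus A_\theta(G) \subseteq D_\theta(G) \cup P_\theta(G)$. A vertex-by-vertex accounting of how deleting elements of $Y \setminus A_\theta(G)$ changes the count of $\theta$-critical components then yields the required $\theta$-barrier equation for $Y \cup A_\theta(G)$. This bookkeeping is the main technical obstacle.

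For the reverse inclusion, I would show that every $v \in V(G) \setminus A_\theta(G)$ is excluded from some maximal $\theta$-barrier. Under the hypothesis, such a $v$ lies in $D_\theta(G) \cup P_\theta(G)$. If $v \in D_\theta(G)$, then by Corollary \ref{P:C7}(iv) $v$ sits in a $\theta$-critical component $H$ of $G \setminus A_\theta(G)$, so $\textnormal{mult}(\theta, H \setminus v) = 0$; adding $v$ to any $\theta$-barrier containing $A_\theta(G)$ destroys this $\theta$-critical component without producing new ones and violates the $\theta$-barrier equation, so by the first inclusion no maximal $\theta$-barrier contains $v$. If $v \in P_\theta(G)$, then $v$ lies in a non-critical component of $G \setminus A_\theta(G)$ with multiplicity $0$ at $\theta$, and I would construct a maximal $\theta$-barrier extending $A_\theta(G)$ that avoids $v$, exploiting the fact that $v$ is $\theta$-positive but not $\theta$-special so the greedy extension of $A_\theta(G)$ has the freedom to choose other $\theta$-positive vertices without ever being forced through $v$. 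Handling this $P_\theta$ case cleanly is the secondary obstacle and will likely require a more delicate argument analyzing how non-special $\theta$-positive vertices can appear in $\theta$-barrier extensions.
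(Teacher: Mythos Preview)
Your outline has two substantive gaps.

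\textbf{Forward inclusion.} The paper has already established (Theorem~\ref{BP:T13b}) that $A_\theta(G)\subseteq X$ for \emph{every} $\theta$-barrier $X$, not just maximal ones; the forward inclusion is therefore a single citation. Your proposed lemma that $Y\cup A_\theta(G)$ is again a $\theta$-barrier is a trivial consequence of this (indeed $Y\cup A_\theta(G)=Y$), but proving it directly by ``vertex-by-vertex accounting'' does not work as stated. After deleting $A_\theta(G)$, the vertices of $Y\setminus A_\theta(G)$ lie in components $H$ with $\m(\theta,H)=0$; the Generalized Berge inequality gives only $c_\theta(H\setminus Y_i)\le |Y_i|$ for $Y_i=Y\cap V(H)$, which yields $c_\theta(G\setminus(Y\cup A_\theta(G)))\le |Y\cup A_\theta(G)|+\m(\theta,G)$, the wrong direction. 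To get equality you would need each $Y_i$ to be a $\theta$-barrier in $H_i$, and there is no mechanism in your sketch that forces this without already knowing $A_\theta(G)\subseteq Y$. The paper's route to Theorem~\ref{BP:T13b} goes through Lemma~\ref{BP:L13a} (stability of $A_\theta$ under deletion of a $P_\theta$-vertex) and an induction on $|X\cap P_\theta(G)|$; your accounting would have to reproduce that content.

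\textbf{Reverse inclusion, the $P_\theta$ case.} This is where the real idea lives, and your ``greedy extension'' is not a proof. The paper's construction is concrete: for $x\in P_\theta(G)$ lying in a component $H$ of $G\setminus A_\theta(G)$ with $\m(\theta,H)=0$, apply the recurrence in Theorem~\ref{I:T2}(c) at the vertex $x$. Since $\m(\theta,H)=0$ but $\m(\theta,H\setminus x)=1$, some neighbour $y$ of $x$ in $H$ must satisfy $\m(\theta,H\setminus xy)=0$; hence $x$ is $\theta$-essential in $H\setminus y$. One then checks that $A_\theta(G)\cup\{y\}\cup A_\theta(H\setminus y)$ is a $\theta$-barrier not containing $x$, and that in any maximal $\theta$-barrier $Z$ extending it, $x$ remains $\theta$-essential in $G\setminus Z$ and hence (by Lemma~\ref{BP:L11}) $x\notin Z$. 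Without producing such a $y$ you cannot guarantee that a maximal extension avoids $x$; a naive greedy choice of $\theta$-positive vertices gives no control over whether $x$ stays $\theta$-positive and eventually gets absorbed. (For the $D_\theta$ case, incidentally, the one-line argument is simply Lemma~\ref{BP:L11}: $\theta$-barriers contain only $\theta$-positive vertices.)
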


\section{Properties of $\theta$-barrier sets}

The number of $\theta$-critical components in $G$ is denoted by $c_{\theta} (G)$. An immediate consequence of part (a) of Theorem \ref {I:T2} and Theorem \ref{P:T6} is the following inequality which is used frequently.

\begin{eqnarray}
\textnormal {mult} (\theta, G\setminus X)\geq c_{\theta} (G\setminus X)~~~\textnormal{for any}~~X \subseteq V(G). \label{ee}
\end{eqnarray}

\noindent We prove the following analogue of Berge's Formula.

\begin {thm}\label {B:T1}\textnormal {[Generalized Berge's Formula]}
\begin {equation}
\textnormal {mult} (\theta, G)=\max_{X\subseteq V(G)} c_{\theta} (G\setminus X)-\vert X\vert.\notag
\end {equation}
\end {thm}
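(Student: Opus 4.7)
The plan is to prove the two inequalities separately, with Corollary \ref{P:C7} doing almost all of the work for the lower bound and iterating Lemma \ref{P:L2} handling the upper bound.

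For the upper bound, I want to show that $c_{\theta}(G\setminus X) - |X| \leq \textnormal{mult}(\theta, G)$ for every $X \subseteq V(G)$. First I apply Lemma \ref{P:L2} inductively: deleting the vertices of $X$ one at a time changes $\textnormal{mult}(\theta, \cdot)$ by at most $1$ per vertex, so
\begin{equation*}
\textnormal{mult}(\theta, G\setminus X) \leq \textnormal{mult}(\theta, G) + |X|.
\end{equation*}
Combining this with inequality \eqref{ee} applied to $G \setminus X$ (viewed as the relevant graph), namely $c_{\theta}(G\setminus X) \leq \textnormal{mult}(\theta, G\setminus X)$, yields $c_{\theta}(G\setminus X) - |X| \leq \textnormal{mult}(\theta, G)$ as required.

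For the matching lower bound, I just need to exhibit a single $X$ achieving equality. The natural choice is $X = A_{\theta}(G)$. By Corollary \ref{P:C7}(ii), the graph $G\setminus A_{\theta}(G)$ has exactly $|A_{\theta}(G)| + \textnormal{mult}(\theta, G)$ many $\theta$-critical components, so
\begin{equation*}
c_{\theta}(G\setminus A_{\theta}(G)) - |A_{\theta}(G)| = \textnormal{mult}(\theta, G).
\end{equation*}
Together with the upper bound this shows the maximum is attained precisely at the value $\textnormal{mult}(\theta, G)$, completing the proof.

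There is no real obstacle: the substantive content (the existence of the canonical decomposition with the correct count of $\theta$-critical components) is already packaged inside Corollary \ref{P:C7}(ii), which itself rests on the generalized Gallai--Edmonds machinery of Theorems \ref{P:T5} and \ref{P:T6}. The one thing to be careful about is that applying Lemma \ref{P:L2} iteratively requires a trivial induction on $|X|$, but this is routine. I should also note in passing that the argument implicitly shows that $A_{\theta}(G)$ is itself a $\theta$-barrier set, which will be useful later when discussing maximal $\theta$-barriers.
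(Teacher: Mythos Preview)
Your proof is correct and follows essentially the same approach as the paper's: both combine inequality~\eqref{ee} with an iteration of Lemma~\ref{P:L2} for the upper bound (the paper phrases it as a contradiction, you do it directly), and both take $X=A_{\theta}(G)$ together with Corollary~\ref{P:C7}(ii) to exhibit the maximizer.
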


\begin {proof} We claim that, $c_{\theta} (G\setminus X)\leq \vert X\vert+\textnormal {mult} (\theta, G)$ for all $X\subseteq V(G)$. Suppose the contrary. Then $c_{\theta} (G\setminus X)>\vert X\vert+\textnormal {mult} (\theta, G)$ for some $X\subseteq V(G)$. Recall that $\textnormal {mult} (\theta, G\setminus X)\geq c_{\theta} (G\setminus X)$. Together with Lemma \ref {P:L2}, we have  $\textnormal {mult} (\theta, G)\geq \textnormal {mult} (\theta, G\setminus X)-\vert X\vert>\textnormal {mult} (\theta, G)$, a contradiction. Hence  $c_{\theta} (G\setminus X)\leq \vert X\vert+\textnormal {mult} (\theta, G)$ for all $X\subseteq V(G)$.

Now it is sufficient to show that there is a set $X\subseteq V(G)$ for which $\textnormal {mult} (\theta, G)=c_{\theta} (G\setminus X)-\vert X\vert$. By (ii) of Corollary  \ref {P:C7} and taking $X=A_{\theta}(G)$, we are done.
\end {proof}

\begin {dfn}\label {BP:D2}  Motivated by the Generalized Berge's Formula, we define a $\theta$-\emph {barrier set} to be a set $X\subseteq V(G)$ for which $ \textnormal {mult} (\theta,G)=c_{\theta} (G\setminus X)-\vert X\vert$.

We define a $\theta$-\emph {extreme set} to be a set $X\subseteq V(G)$ for which  $\textnormal {mult} (\theta,G\setminus X)=\textnormal {mult} (\theta,G)+\vert X\vert$.
\end {dfn}

Note that the definitions of $0$-extreme set and extreme set coincide. But the definitions of $0$-barrier set and barrier set are different. Our next proposition shows that a $0$-barrier set is a barrier set.

\begin {prop}\label {BP:P3}  A  $0$-barrier set is a barrier set.
\end {prop}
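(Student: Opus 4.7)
The plan is to reduce the statement to the elementary observation that every $0$-critical graph has odd order, after which classical Berge's Formula (Theorem \ref{P:T8}) forces the two notions of barrier set to agree at $\theta = 0$.

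First I would establish the key auxiliary fact: if $H$ is a $0$-critical graph, then $|V(H)|$ is odd. Indeed, every exponent $|V(H)|-2r$ appearing in $\mu(H,x)$ has the same parity as $|V(H)|$, and the lowest such exponent with a nonzero coefficient is $|V(H)|-2\nu(H)$, where $\nu(H)$ denotes the matching number of $H$ (because $p(H,\nu(H))\neq 0$ while $p(H,r)=0$ for $r>\nu(H)$). Consequently $\textnormal{mult}(0,H)=|V(H)|-2\nu(H)$. Definition \ref{P:D4} forces $\textnormal{mult}(0,H)=1$, whence $|V(H)|=2\nu(H)+1$ is odd. In particular, every $0$-critical component is an odd component, so for every graph $G'$ we have the inequality $c_0(G')\leq c_{odd}(G')$.

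Given a $0$-barrier set $X\subseteq V(G)$, we then have $\textnormal{mult}(0,G)=c_0(G\setminus X)-|X|$ by definition. Applying the inequality above with $G'=G\setminus X$ and invoking classical Berge's Formula (Theorem \ref{P:T8}) gives the sandwich
\[
\textnormal{mult}(0,G)=c_0(G\setminus X)-|X|\;\leq\;c_{odd}(G\setminus X)-|X|\;\leq\;\textnormal{mult}(0,G).
\]
All inequalities must therefore be equalities, and in particular $\textnormal{mult}(0,G)=c_{odd}(G\setminus X)-|X|$, which is exactly the defining condition for $X$ to be a barrier set in the classical sense.

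I do not anticipate any real obstacle. The only piece of content is the parity observation for $0$-critical graphs, which follows straight from the definition of $\mu(G,x)$; once this is in hand, the proof is a one-line sandwich between Berge's Formula and the hypothesis on $X$.
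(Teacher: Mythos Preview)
Your argument is correct and is essentially identical to the paper's: both use the inequality $c_0(G\setminus X)\le c_{odd}(G\setminus X)$ together with classical Berge's Formula to sandwich and force equality. The only difference is that you spell out why a $0$-critical graph has odd order, whereas the paper simply asserts $c_0\le c_{odd}$ without comment.
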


\begin {proof} Let $X$ be a $0$-barrier set. Then $c_{0} (G\setminus X)=\textnormal {mult} (0,G)+\vert X\vert$. Note that $c_{0} (G\setminus X)\leq c_{odd} (G\setminus X)$. Using Theorem \ref {P:T8}, we conclude that $c_{odd} (G\setminus X)=\textnormal {mult} (0,G)+\vert X\vert$. Hence $X$ is a barrier set.
\end {proof}

The converse of Proposition \ref {BP:P3}  is not true. In Figure 1, $X=\{u,v\}$ is a barrier set in $G$ but it is not a $0$-barrier set.

\begin {center}
\scalebox{0.7}[0.7]{\includegraphics{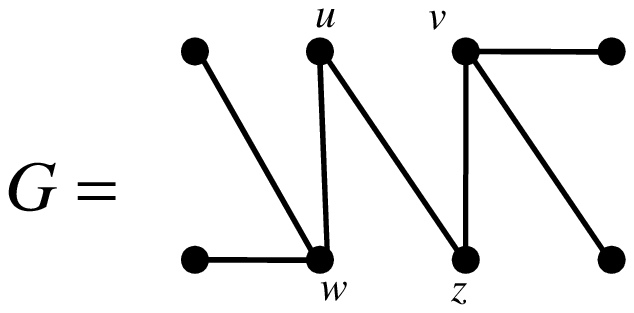}}

\textnormal {Figure 1.}
\end {center}
However we have a weak converse of Proposition \ref {BP:P3}.
\begin {prop}\label {BP:P3b}  A (inclusionwise) maximal barrier set is a maximal $0$-barrier set.
\end {prop}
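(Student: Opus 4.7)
The plan is to establish two things for a maximal barrier set $X$ in $G$: (a) $X$ is itself a $0$-barrier, and (b) no proper superset of $X$ is a $0$-barrier. Part (b) is almost free. If $X'$ is any $0$-barrier, then $c_0(G\setminus X')\le c_{odd}(G\setminus X')$ combined with the classical Berge Formula forces $c_{odd}(G\setminus X')-|X'|=\textnormal{mult}(0,G)$, so $X'$ is automatically a classical barrier. A proper $0$-barrier superset of $X$ would therefore be a proper barrier superset, contradicting the maximality of $X$.

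For (a), I plan first to describe the components of $G\setminus X$ sharply. Iterating Lemma \ref{P:L2} gives $\textnormal{mult}(0,G\setminus X)\le\textnormal{mult}(0,G)+|X|$, and because $X$ is a barrier the right-hand side equals $c_{odd}(G\setminus X)$. Multiplicativity of $\mu$ over components together with the parity fact that an odd-order graph has positive $\textnormal{mult}(0,\cdot)$ yields the reverse inequality, so $\textnormal{mult}(0,G\setminus X)=c_{odd}(G\setminus X)$. Summing across components pins $\textnormal{mult}(0,H)=1$ for every odd component $H$ of $G\setminus X$ (and $0$ for every even one). In particular, by Theorem \ref{P:T6}, an odd component $H$ fails to be $0$-critical exactly when some vertex of $H$ is not $0$-essential in $H$.

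Now suppose, for contradiction, that $X$ is not a $0$-barrier; then some odd component $H$ of $G\setminus X$ is not $0$-critical, so there is $v\in V(H)$ that is not $0$-essential in $H$. Because $\textnormal{mult}(0,H)=1$ and $|V(H)\setminus\{v\}|$ is even, Lemma \ref{P:L2} plus the parity of $\textnormal{mult}(0,H\setminus v)$ forces $\textnormal{mult}(0,H\setminus v)=2$. Berge's Formula applied to $H\setminus v$ then produces $Y\subseteq V(H)\setminus\{v\}$ with $c_{odd}((H\setminus v)\setminus Y)-|Y|=2$. The bookkeeping step is to set $X'=X\cup\{v\}\cup Y$ and verify it is still a barrier of $G$: in $G\setminus X'$ the single odd component $H$ is replaced by the $|Y|+2$ odd components of $H\setminus(\{v\}\cup Y)$, while $|X'|-|X|=|Y|+1$, and hence $c_{odd}(G\setminus X')-|X'|=c_{odd}(G\setminus X)-|X|=\textnormal{mult}(0,G)$. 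Since $X\subsetneq X'$, this contradicts the maximality of $X$.

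The main obstacle is precisely this last extension step: adding $v$ alone to $X$ need not preserve the barrier property, because $H\setminus v$ may well be connected and contribute only one odd component. The remedy is to package $v$ with an entire Berge-witness $Y$ inside $H\setminus v$, calibrated so that the odd-component count grows by exactly $|Y|+1$ more than the vertex count grows. Once this packaging is in place, part (b) is immediate from the comparison $c_0\le c_{odd}$.
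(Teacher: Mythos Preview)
Your proof is correct. The overall structure matches the paper's: both first show that every odd component $H$ of $G\setminus X$ has $\textnormal{mult}(0,H)=1$, then argue by contradiction that a non-$0$-critical odd component lets one enlarge $X$ to a strictly bigger barrier. The difference lies in \emph{what} gets adjoined to $X$. The paper invokes Corollary~\ref{P:C7}(ii) directly on $H$: since $A_0(H)\neq\varnothing$, one has $c_0(H\setminus A_0(H))=|A_0(H)|+1$, and after comparing with $c_{odd}$ via Theorem~\ref{P:T8} the set $X\cup A_0(H)$ is a larger barrier. Your route is more elementary: you pick any non-$0$-essential $v\in V(H)$, use the parity of $\mu(H\setminus v,x)$ (an even polynomial since $|V(H\setminus v)|$ is even) together with Lemma~\ref{P:L2} to force $\textnormal{mult}(0,H\setminus v)=2$, and then extract a Berge witness $Y$ inside $H\setminus v$ so that $X\cup\{v\}\cup Y$ works. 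This avoids the Gallai--Edmonds input (Corollary~\ref{P:C7}) entirely, relying only on Lemma~\ref{P:L2}, Theorem~\ref{P:T8}, and the parity observation; the paper's argument is shorter but leans on heavier structural machinery. Your handling of part~(b) via Proposition~\ref{BP:P3} is identical to the paper's. One small remark: the appeal to Theorem~\ref{P:T6} in your second paragraph is unnecessary, since once you know $\textnormal{mult}(0,H)=1$ the definition of $0$-critical already gives the equivalence you state.
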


\begin {proof} Let $X$ be a maximal barrier set. Note that  $\vert X\vert+\textnormal {mult} (0,G)\geq \textnormal {mult} (0,G\setminus X)\geq c_{odd} (G\setminus X)=\vert X\vert+\textnormal {mult} (0,G)$, where the first inequality follows from Lemma \ref {P:L2} and the last inequality follows from the fact that $X$ is a barrier set. Therefore, equality holds throughout whence $\textnormal {mult} (0,G\setminus X)=c_{odd} (G\setminus X)$ and 0 is a root of multiplicity $1$ in each of the odd components in $G\setminus X$.

We claim that an odd component in $G\setminus X$ is $0$-critical. Suppose the contrary. Let $H$ be an odd component in $G\setminus X$ and  $H$ is not $0$-critical. Then $A_0(H)\neq \varnothing$. Now $\textnormal {mult} (0,H)=1$. By (ii) of Corollary \ref {P:C7}, $c_{0} (H\setminus A_0(H))=\vert A_0(H)\vert+1$. Since $c_{0} (H\setminus A_0(H))\leq c_{odd} (H\setminus A_0(H))$, by Theorem \ref {P:T8}, we conclude that
$c_{odd} (H\setminus A_0(H))=\vert A_0(H)\vert+1$. Therefore $c_{odd} (G\setminus (X\cup A_0(H))=c_{odd} (G\setminus X)-1+c_{odd} (H\setminus A_0(H))=\vert X\vert+\textnormal {mult} (0,G)-1+\vert A_0(H)\vert+1=\vert X\cup A_0(H)\vert+\textnormal {mult} (0,G)$. But then $X\cup A_0(H)$ is a barrier set in $G$, a contrary to the maximality of $X$. Hence an odd component in $G\setminus X$ must be $0$-critical. This means that $c_{odd} (G\setminus X)=c_0 (G\setminus X)$ and $X$ is a $0$-barrier set. By Proposition \ref {BP:P3},  we conclude that $X$ must be a maximal $0$-barrier set.
\end {proof}

Now we shall study the properties of $\theta$-barrier and $\theta$-extreme sets.

\begin {lm}\label {BP:L3} A subset of a $\theta$-extreme set is a $\theta$-extreme set.
\end {lm}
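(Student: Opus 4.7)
The plan is to use Lemma~\ref{P:L2} twice, chaining the two inequalities so that the extreme condition on $X$ forces equality also for $Y$. Let $X$ be a $\theta$-extreme set and let $Y\subseteq X$. Set $k=|Y|$ and $\ell=|X\setminus Y|$, so $|X|=k+\ell$. I want to show $\textnormal{mult}(\theta,G\setminus Y)=\textnormal{mult}(\theta,G)+k$.

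First I would apply Lemma~\ref{P:L2} one vertex at a time: deleting the $k$ vertices of $Y$ from $G$ can raise the multiplicity of $\theta$ by at most $1$ at each step, so
\begin{equation}
\textnormal{mult}(\theta,G\setminus Y)\leq \textnormal{mult}(\theta,G)+k.\notag
\end{equation}
Similarly, deleting the $\ell$ remaining vertices of $X\setminus Y$ from $G\setminus Y$ gives
\begin{equation}
\textnormal{mult}(\theta,G\setminus X)\leq \textnormal{mult}(\theta,G\setminus Y)+\ell.\notag
\end{equation}

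Now I would invoke the hypothesis that $X$ is $\theta$-extreme, which means $\textnormal{mult}(\theta,G\setminus X)=\textnormal{mult}(\theta,G)+k+\ell$. Substituting this into the second inequality yields $\textnormal{mult}(\theta,G\setminus Y)\geq \textnormal{mult}(\theta,G)+k$, which combined with the first inequality forces equality $\textnormal{mult}(\theta,G\setminus Y)=\textnormal{mult}(\theta,G)+k$, so $Y$ is $\theta$-extreme.

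There is no real obstacle here; the only subtlety is the implicit induction inside Lemma~\ref{P:L2}, namely that repeated single-vertex deletions give the bounds $|\textnormal{mult}(\theta,G)-\textnormal{mult}(\theta,G\setminus S)|\leq |S|$ for any $S\subseteq V(G)$, which follows by a straightforward induction on $|S|$ from the one-vertex statement already at hand. Once that is noted, the argument is a two-line squeeze between the two applications of the lemma.
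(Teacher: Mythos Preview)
Your proof is correct and follows essentially the same approach as the paper: both arguments iterate Lemma~\ref{P:L2} to get the two bounds $\textnormal{mult}(\theta,G\setminus Y)\leq \textnormal{mult}(\theta,G)+|Y|$ and $\textnormal{mult}(\theta,G\setminus X)\leq \textnormal{mult}(\theta,G\setminus Y)+|X\setminus Y|$, then use the extremeness of $X$ to force equality. The only cosmetic difference is that the paper phrases the squeeze as a proof by contradiction, whereas you write it directly.
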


\begin {proof} Let $X$ be an $\theta$-extreme set and $Y\subseteq X$. Now $\textnormal {mult} (\theta,G\setminus X)=\textnormal {mult} (\theta,G)+\vert X\vert$. By Lemma \ref {P:L2}, $\textnormal {mult} (\theta,G\setminus Y)\leq \textnormal {mult} (\theta,G)+\vert Y\vert$. If $Y$ is not $\theta$-extreme then  $\textnormal {mult} (\theta,G\setminus Y)<\textnormal {mult} (\theta,G)+\vert Y\vert$, and by Lemma \ref {P:L2} again, $\textnormal {mult} (\theta,G\setminus X)\leq \textnormal {mult} (\theta,G\setminus Y)+\vert X\setminus Y\vert< \textnormal {mult} (\theta,G)+\vert X\vert$, a contradiction. Hence a subset of an $\theta$-extreme set is $\theta$-extreme.
\end {proof}

\begin {lm}\label {BP:L4} If $X$ is a $\theta$-barrier [$\theta$-extreme] set and $Y\subseteq X$ then $X\setminus Y$ is a $\theta$-barrier [$\theta$-extreme] set in $G\setminus Y$.
\end {lm}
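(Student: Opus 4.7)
The plan is to handle the two assertions in sequence. Observe first the obvious identities $(G\setminus Y)\setminus (X\setminus Y)=G\setminus X$ and $\vert X\setminus Y\vert=\vert X\vert-\vert Y\vert$, which will reduce both claims to a comparison between the multiplicities at $G$, $G\setminus Y$, and $G\setminus X$.

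For the $\theta$-extreme case, I would apply Lemma \ref{BP:L3} to conclude that $Y\subseteq X$ is itself $\theta$-extreme, so $\textnormal{mult}(\theta,G\setminus Y)=\textnormal{mult}(\theta,G)+\vert Y\vert$. Then
\begin{equation}
\textnormal{mult}(\theta,(G\setminus Y)\setminus(X\setminus Y))=\textnormal{mult}(\theta,G\setminus X)=\textnormal{mult}(\theta,G)+\vert X\vert=\textnormal{mult}(\theta,G\setminus Y)+\vert X\setminus Y\vert,\notag
\end{equation}
which is exactly what is needed.

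For the $\theta$-barrier case, the main step is to argue that a $\theta$-barrier set is automatically $\theta$-extreme; once this is done, the previous paragraph carries over. To see this, suppose $X$ is a $\theta$-barrier set, so $c_\theta(G\setminus X)=\textnormal{mult}(\theta,G)+\vert X\vert$. Iterating Lemma \ref{P:L2} gives $\textnormal{mult}(\theta,G\setminus X)\leq\textnormal{mult}(\theta,G)+\vert X\vert=c_\theta(G\setminus X)$, while inequality (\ref{ee}) yields the reverse bound $\textnormal{mult}(\theta,G\setminus X)\geq c_\theta(G\setminus X)$. Equality throughout gives $\textnormal{mult}(\theta,G\setminus X)=\textnormal{mult}(\theta,G)+\vert X\vert$, so $X$ is $\theta$-extreme.

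With this in hand, Lemma \ref{BP:L3} says $Y$ is $\theta$-extreme, hence $\textnormal{mult}(\theta,G\setminus Y)=\textnormal{mult}(\theta,G)+\vert Y\vert$, and therefore
\begin{equation}
c_\theta((G\setminus Y)\setminus(X\setminus Y))-\vert X\setminus Y\vert=c_\theta(G\setminus X)-\vert X\vert+\vert Y\vert=\textnormal{mult}(\theta,G)+\vert Y\vert=\textnormal{mult}(\theta,G\setminus Y),\notag
\end{equation}
showing $X\setminus Y$ is a $\theta$-barrier set in $G\setminus Y$. The only nontrivial obstacle is the observation that $\theta$-barrier implies $\theta$-extreme, which is the crux of the argument; everything else is bookkeeping against the already-established Lemma \ref{P:L2}, inequality (\ref{ee}) and Lemma \ref{BP:L3}.
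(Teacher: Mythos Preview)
Your argument is correct, and you also spell out the $\theta$-extreme case, which the paper's proof omits. The route, however, differs from the paper's. The paper argues the $\theta$-barrier case directly via the Generalized Berge Formula (Theorem~\ref{B:T1}) applied to $G\setminus Y$: it writes the squeeze
\[
c_\theta(G\setminus X)\le \vert X\setminus Y\vert+\textnormal{mult}(\theta,G\setminus Y)\le \vert X\setminus Y\vert+\textnormal{mult}(\theta,G)+\vert Y\vert=\vert X\vert+\textnormal{mult}(\theta,G)=c_\theta(G\setminus X),
\]
forcing equality in the first inequality. You instead first establish that a $\theta$-barrier set is $\theta$-extreme (this is exactly Lemma~\ref{BP:L8a}, which appears later in the paper), then feed that through Lemma~\ref{BP:L3} to get $\textnormal{mult}(\theta,G\setminus Y)=\textnormal{mult}(\theta,G)+\vert Y\vert$ and finish by bookkeeping. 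Your approach avoids invoking Theorem~\ref{B:T1} and makes the two cases parallel, at the cost of anticipating Lemma~\ref{BP:L8a}; the paper's approach is shorter for the barrier case but leaves the extreme case to the reader.
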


\begin {proof} Note that $c_{\theta} (G\setminus X)=\vert X\vert+\textnormal {mult} (\theta, G)$. By  Theorem \ref {B:T1} and Lemma \ref {P:L2}, $c_{\theta} (G\setminus X)\leq \vert X\setminus Y\vert+\textnormal {mult} (\theta, G\setminus Y)\leq \vert X\setminus Y\vert+\textnormal {mult} (\theta, G)+\vert Y\vert= \vert X\vert+\textnormal {mult} (\theta, G)$. Hence $c_{\theta} (G\setminus X)=\vert X\setminus Y\vert+\textnormal {mult} (\theta, G\setminus Y)$ and $X\setminus Y$ is a $\theta$-barrier set in $G\setminus Y$.
\end {proof}

\begin {lm}\label {BP:L6} Every $\theta$-extreme set of $G$ lies in a $\theta$-barrier set.
\end {lm}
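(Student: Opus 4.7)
The plan is, given a $\theta$-extreme set $X$, to construct an explicit $\theta$-barrier set containing $X$, namely $Y = X \cup A_{\theta}(G\setminus X)$. The intuition is that the $\theta$-extreme hypothesis says deletion of $X$ pushes the multiplicity up by exactly $|X|$, while Corollary~\ref{P:C7}(ii) says that deleting the canonical set $A_{\theta}$ from any graph exposes as many $\theta$-critical components as the multiplicity plus $|A_{\theta}|$. Composing these two operations should give a set whose complement has exactly $|Y| + \mathrm{mult}(\theta,G)$ critical components.

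In detail, I would first let $H = G\setminus X$ and $A = A_{\theta}(H)$. Since $A \subseteq V(H) = V(G)\setminus X$, the sets $X$ and $A$ are disjoint, so $|X \cup A| = |X| + |A|$ and $G\setminus (X\cup A) = H\setminus A$. Next, I would apply Corollary~\ref{P:C7}(ii) to the graph $H$ to obtain
\begin{equation}
c_{\theta}(H\setminus A) = |A| + \mathrm{mult}(\theta, H).\notag
\end{equation}
Then, invoking the hypothesis that $X$ is $\theta$-extreme, I substitute $\mathrm{mult}(\theta,H) = \mathrm{mult}(\theta,G) + |X|$ to get
\begin{equation}
c_{\theta}(G\setminus(X\cup A)) = |A| + |X| + \mathrm{mult}(\theta,G) = |X\cup A| + \mathrm{mult}(\theta,G),\notag
\end{equation}
which by Definition~\ref{BP:D2} exhibits $X\cup A$ as a $\theta$-barrier set containing $X$.

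There is essentially no obstacle here beyond spotting the right candidate set; once one recognises that Corollary~\ref{P:C7}(ii) is precisely the mechanism for producing a $\theta$-barrier set from a graph, the proof reduces to the one-line multiplicity computation above, and the disjointness of $X$ and $A_{\theta}(G\setminus X)$ is automatic.
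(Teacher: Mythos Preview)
Your proposal is correct and follows essentially the same approach as the paper: both take $T = X \cup A_{\theta}(G\setminus X)$ and verify via Corollary~\ref{P:C7}(ii) together with the $\theta$-extreme hypothesis that $c_{\theta}(G\setminus T) = |T| + \textnormal{mult}(\theta,G)$. Your write-up is in fact slightly more careful, since you explicitly note the disjointness of $X$ and $A_{\theta}(G\setminus X)$ needed for $|T| = |X| + |A_{\theta}(G\setminus X)|$.
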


\begin {proof} Let $X$ be a $\theta$-extreme set and $T=A_{\theta}(G\setminus X)\cup X$. Then
\begin {align}
c_{\theta} (G\setminus T) & =c_{\theta}(G\setminus (A_{\theta}(G\setminus X)\cup X)) \notag\\
& =c_{\theta}((G\setminus X)\setminus A_{\theta}(G\setminus X)) \notag\\
& =\vert A_{\theta}(G\setminus X)\vert +\textnormal {mult} (\theta, G\setminus X)\ \textnormal {(by (ii) of Corollary \ref {P:C7})}\notag\\
& =\vert A_{\theta}(G\setminus X)\vert +\textnormal {mult} (\theta, G)+\vert X\vert\ \textnormal {($X$ is $\theta$-extreme)}\notag\\
& =\vert T\vert +\textnormal {mult} (\theta, G),\notag
\end {align}
and hence $T$ is a $\theta$-barrier set.
\end {proof}

\begin {lm}\label {BP:L8a} Let $X$ be a $\theta$-barrier set. Then $X$ is a $\theta$-extreme set.
\end {lm}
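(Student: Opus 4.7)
The plan is to sandwich $\m(\theta, G\setminus X)$ between two bounds that force equality with $\m(\theta,G)+|X|$.

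First I would establish the upper bound $\m(\theta, G\setminus X)\leq \m(\theta,G)+|X|$. This is an immediate consequence of Lemma \ref{P:L2} applied $|X|$ times: delete the vertices of $X$ one at a time, and each deletion increases the multiplicity by at most $1$. This bound holds for any $X\subseteq V(G)$, regardless of whether $X$ is a $\theta$-barrier.

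Next I would establish the matching lower bound using the hypothesis that $X$ is a $\theta$-barrier, i.e., $c_\theta(G\setminus X)=|X|+\m(\theta,G)$. Combining this with the inequality $\m(\theta, G\setminus X)\geq c_\theta(G\setminus X)$ from (\ref{ee}) yields $\m(\theta, G\setminus X)\geq |X|+\m(\theta,G)$.

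Putting the two bounds together forces $\m(\theta, G\setminus X)=\m(\theta,G)+|X|$, which is exactly the definition of a $\theta$-extreme set. There is no real obstacle here; the whole argument is just the observation that the $\theta$-barrier condition saturates the lower bound on $\m(\theta,G\setminus X)$ coming from the $\theta$-critical components, while Lemma \ref{P:L2} always supplies the matching upper bound.
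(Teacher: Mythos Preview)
Your proof is correct and essentially identical to the paper's own argument: both use Lemma~\ref{P:L2} for the upper bound $\m(\theta,G\setminus X)\leq \m(\theta,G)+|X|$ and inequality~(\ref{ee}) together with the $\theta$-barrier hypothesis for the lower bound, then sandwich. The paper just compresses the two bounds into a single chain of inequalities rather than stating them separately.
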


\begin {proof} Recall from (\ref{ee}) that $\textnormal {mult} (\theta, G\setminus X)\geq c_{\theta} (G\setminus X)$. Since $c_{\theta} (G\setminus X)=\vert X\vert+\textnormal {mult} (\theta, G)$, by Lemma \ref {P:L2}, we have
\begin {equation}
\textnormal {mult} (\theta, G)\geq \textnormal {mult} (\theta, G\setminus X)-\vert X\vert\geq c_{\theta} (G\setminus X)-\vert X\vert=\textnormal {mult} (\theta, G).\notag
\end {equation}
Hence $\textnormal {mult} (\theta, G\setminus X)=\textnormal {mult} (\theta, G)+\vert X\vert$ and so $X$ is a $\theta$-extreme set.
\end {proof}

Note that in general a $\theta$-extreme set is not a $\theta$-barrier set. In Figure 1, $X_1=\{u\}$ is a $0$-extreme set but it is not a $0$-barrier set.

\begin {lm}\label {BP:L8b} Let $X$ be a $\theta$-barrier set and $H$ be a component of $G\setminus X$. Then either $H$ is $\theta$-critical or $\textnormal {mult} (\theta,H)=0$.
\end {lm}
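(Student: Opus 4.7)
The plan is to squeeze $\textnormal{mult}(\theta,G\setminus X)$ between two expressions that are already known to be equal, forcing the multiplicative contribution of every non-$\theta$-critical component to vanish.

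First I would collect the two key equalities. Since $X$ is a $\theta$-barrier, $c_\theta(G\setminus X)=|X|+\textnormal{mult}(\theta,G)$ by definition. By Lemma \ref{BP:L8a}, $X$ is also $\theta$-extreme, so $\textnormal{mult}(\theta,G\setminus X)=\textnormal{mult}(\theta,G)+|X|$. Chaining these gives the crucial identity $\textnormal{mult}(\theta,G\setminus X)=c_\theta(G\setminus X)$, i.e.\ equality holds in the inequality \eqref{ee} applied to $X$.

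Next I would split over the connected components of $G\setminus X$. Let $H_1,\dots,H_m$ be the components. By part~(a) of Theorem~\ref{I:T2}, $\mu(G\setminus X,x)=\prod_{i=1}^{m}\mu(H_i,x)$, whence
\begin{equation}
\textnormal{mult}(\theta,G\setminus X)=\sum_{i=1}^{m}\textnormal{mult}(\theta,H_i).\notag
\end{equation}
On the other hand, each $\theta$-critical component contributes exactly $1$ to $\textnormal{mult}(\theta,H_i)$ (by Definition~\ref{P:D4}), and there are precisely $c_\theta(G\setminus X)$ such components. Combining the previous identity with the equality from the first step yields
\begin{equation}
\sum_{H_i\ \theta\textnormal{-critical}}1 \;+\;\sum_{H_i\ \textnormal{not}\ \theta\textnormal{-critical}}\textnormal{mult}(\theta,H_i)\;=\;c_\theta(G\setminus X).\notag
\end{equation}

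Since the first sum on the left already equals $c_\theta(G\setminus X)$, the second sum must vanish. Every summand in it is a non-negative integer, so each term equals zero: if $H$ is a component of $G\setminus X$ that is not $\theta$-critical, then $\textnormal{mult}(\theta,H)=0$. This is exactly the dichotomy asserted. There is no real obstacle here once Lemma~\ref{BP:L8a} is available; the only thing to be careful about is invoking the multiplicativity of $\mu$ over components to turn the global equality into a pointwise statement about the $H_i$.
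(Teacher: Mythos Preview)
Your argument is correct and follows essentially the same route as the paper: both derive the identity $\textnormal{mult}(\theta,G\setminus X)=c_\theta(G\setminus X)$ from the barrier condition together with Lemma~\ref{BP:L8a}, and then use the component decomposition (implicitly via~\eqref{ee} in the paper, explicitly in your sum) to force every non-$\theta$-critical component to have multiplicity zero. The only cosmetic difference is that the paper phrases the final step as a contradiction against~\eqref{ee}, whereas you write out the sum and observe the non-negative residual must vanish.
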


\begin {proof}  Note that $c_{\theta} (G\setminus X)=\vert X\vert+\textnormal {mult} (\theta, G)$. By Lemma \ref {BP:L8a}, $X$ is a $\theta$-extreme set. Therefore  $\textnormal {mult} (\theta, G\setminus X)=\textnormal {mult} (\theta, G) +\vert X\vert=c_{\theta} (G\setminus X)$. Now if $H$ is not $\theta$-critical and $\textnormal {mult} (\theta,H)>0$, then by (\ref{ee}), $\textnormal {mult} (\theta, G\setminus X)> c_{\theta} (G\setminus X)$, a contradiction. Hence either $H$ is $\theta$-critical or $\textnormal {mult} (\theta,H)=0$.
\end {proof}

\begin {lm}\label {BP:L8} Let $X$ be a maximal  $\theta$-barrier set. Let $H$ be a component of $G\setminus X$ and $\textnormal {mult} (\theta,H)=0$. Then for all $u\in V(H)$, $u$ is $\theta$-neutral in $H$. Furthermore for all $Y\subseteq V(H)$ and $Y\neq \varnothing$, $c_{\theta}(H\setminus Y)\leq \vert Y\vert-1$.
\end {lm}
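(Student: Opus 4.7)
The plan is to leverage the maximality of $X$ through the following extension principle: if some non-empty $Y \subseteq V(H)$ satisfies $c_\theta(H\setminus Y) = |Y|$, then $X \cup Y$ is a $\theta$-barrier in $G$ strictly containing $X$, contradicting maximality. Both assertions of the lemma will reduce to exhibiting such a $Y$ to derive a contradiction.

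I would first prove the second statement. Suppose for contradiction that there exists a non-empty $Y \subseteq V(H)$ with $c_\theta(H\setminus Y) \geq |Y|$. Applying the Generalized Berge's Formula (Theorem \ref{B:T1}) to $H$ and using $\textnormal{mult}(\theta, H) = 0$, the reverse inequality $c_\theta(H\setminus Y) \leq |Y|$ holds, forcing $c_\theta(H\setminus Y) = |Y|$. The key step is then the identity
\begin{equation*}
c_\theta(G \setminus (X \cup Y)) = c_\theta(G \setminus X) + c_\theta(H \setminus Y),
\end{equation*}
which uses that $Y \subseteq V(H)$ affects only the component $H$ of $G \setminus X$, and that $H$ is not $\theta$-critical (since $\theta$-critical components have multiplicity $1$, not $0$), so $H$ contributes zero to $c_\theta(G \setminus X)$. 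Combined with $|X \cup Y| = |X| + |Y|$ (disjoint union), this yields $c_\theta(G \setminus (X \cup Y)) - |X \cup Y| = c_\theta(G \setminus X) - |X| = \textnormal{mult}(\theta, G)$, making $X \cup Y$ a $\theta$-barrier set properly extending $X$---the desired contradiction.

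For the first statement, I would derive it from the second. Fix $u \in V(H)$. By Lemma \ref{P:L2} and non-negativity of multiplicities, $\textnormal{mult}(\theta, H \setminus u) \in \{0, 1\}$, so $u$ is either $\theta$-neutral or $\theta$-positive in $H$. If $u$ were $\theta$-positive, then applying (ii) of Corollary \ref{P:C7} to $H \setminus u$ would give that $(H\setminus u) \setminus A_\theta(H\setminus u)$ has $|A_\theta(H\setminus u)| + 1$ $\theta$-critical components. Setting $Y := \{u\} \cup A_\theta(H\setminus u) \subseteq V(H)$ produces a non-empty set with $c_\theta(H \setminus Y) = |Y|$, contradicting the second statement already established. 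Hence $u$ must be $\theta$-neutral in $H$.

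The main obstacle I anticipate is the component-counting identity $c_\theta(G \setminus (X \cup Y)) = c_\theta(G \setminus X) + c_\theta(H \setminus Y)$: one must carefully verify that $H$ itself is not $\theta$-critical, for otherwise removing $Y$ from $H$ would destroy a $\theta$-critical component of $G \setminus X$ while potentially creating new ones, and the arithmetic would fail to balance. The hypothesis $\textnormal{mult}(\theta, H) = 0$ is precisely what guarantees this clean bookkeeping.
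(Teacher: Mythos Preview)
Your proposal is correct. The ingredients---the extension principle ``if $c_\theta(H\setminus Y)=|Y|$ then $X\cup Y$ is a larger $\theta$-barrier,'' the component-counting identity, and the set $Y=\{u\}\cup A_\theta(H\setminus u)$---are exactly what the paper uses. The difference is organizational: the paper proves the two assertions in the opposite order. It first runs the extension argument only for the specific choice $Y=\{u\}\cup A_\theta(H\setminus u)$ to establish that every $u\in V(H)$ is $\theta$-neutral, and then deduces the inequality $c_\theta(H\setminus Y)\le|Y|-1$ by picking $y\in Y$, using neutrality to get $\textnormal{mult}(\theta,H\setminus y)=0$, and applying the Generalized Berge Formula to $H\setminus y$ and $Y\setminus\{y\}$. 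Your version abstracts the extension principle to handle arbitrary $Y$ first and then recovers neutrality as a special case; this is slightly more uniform, while the paper's derivation of the inequality is a touch shorter once neutrality is in hand. Either way the substance is the same.
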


\begin {proof} Suppose $H$ has a $\theta$-positive vertex, say $u$. Then $\textnormal {mult} (\theta, H\setminus u)=1$. By (ii) of Corollary  \ref {P:C7}, $c_{\theta} ((H\setminus u)\setminus A_{\theta}(H\setminus u))=\vert A_{\theta}(H\setminus u)\vert+\textnormal {mult} (\theta, H\setminus u)=\vert A_{\theta}(H\setminus u)\vert+1$. But then
\begin {align}
c_{\theta} (G\setminus (X\cup \{u\}\cup A_{\theta}(H\setminus u)) &=c_{\theta} (G\setminus X)+c_{\theta} ((H\setminus u)\setminus A_{\theta}(H\setminus u))\notag\\
&=\vert X\vert+\textnormal {mult} (\theta, G)+\vert A_{\theta}(H\setminus u)\vert+1\notag\\
&=\vert X\cup \{u\}\cup A_{\theta}(H\setminus u)\vert+\textnormal {mult} (\theta, G),\notag
\end {align}
and so $X\cup \{u\}\cup A_{\theta}(H\setminus u)$ is a $\theta$-barrier in $G$, a contrary to the maximality of $X$. Hence
 for all $u\in V(H)$, $u$ is $\theta$-neutral in $H$.

Since $Y\neq \varnothing$, there is a $y\in Y$. Let $Y'=Y\setminus y$ and $H'=H\setminus y$. Note that $\textnormal {mult} (\theta,H\setminus y)=0$ since $y$ is  $\theta$-neutral in $H$. By Theorem \ref {B:T1}, $c_{\theta} (H'\setminus Y')\leq \vert Y'\vert$. Since $H\setminus Y=H'\setminus Y'$, we have $c_{\theta}(H\setminus Y)\leq \vert Y\vert-1$.
\end {proof}

\begin {lm}\label {BP:L9}  Let $G$ be $\theta$-critical. Then for all $Y\subseteq V(G)$ and $Y\neq \varnothing$, $c_{\theta}(G\setminus Y)\leq \vert Y\vert-1$.
\end {lm}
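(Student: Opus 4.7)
The plan is to mimic the very last step of the preceding Lemma \ref{BP:L8}: peel off a single vertex of $Y$ to drop $\textnormal{mult}(\theta,\cdot)$ to zero, then apply the Generalized Berge's Formula (Theorem \ref{B:T1}) to the resulting graph.

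More concretely, since $G$ is $\theta$-critical, every vertex of $G$ is $\theta$-essential and $\textnormal{mult}(\theta,G)=1$. Pick any $y\in Y$ (which is possible since $Y\neq\varnothing$) and set $Y'=Y\setminus\{y\}$. The key observation is that, because $y$ is $\theta$-essential,
\begin{equation}
\textnormal{mult}(\theta,G\setminus y)=\textnormal{mult}(\theta,G)-1=0.\notag
\end{equation}

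Now apply Theorem \ref{B:T1} to the graph $G\setminus y$ with the subset $Y'\subseteq V(G\setminus y)$: this yields
\begin{equation}
c_{\theta}\bigl((G\setminus y)\setminus Y'\bigr)-\vert Y'\vert\;\leq\;\textnormal{mult}(\theta,G\setminus y)=0,\notag
\end{equation}
so $c_{\theta}\bigl((G\setminus y)\setminus Y'\bigr)\leq \vert Y'\vert=\vert Y\vert-1$. Since $(G\setminus y)\setminus Y'=G\setminus Y$, the desired bound $c_{\theta}(G\setminus Y)\leq \vert Y\vert-1$ follows immediately.

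There is no real obstacle here — the only thing to be careful about is the initial step of choosing $y\in Y$ and justifying $\textnormal{mult}(\theta,G\setminus y)=0$ from the definition of $\theta$-critical; once that reduction is made, the Generalized Berge's Formula does all the work. In fact the argument is essentially verbatim the tail end of Lemma \ref{BP:L8}, with ``$y$ is $\theta$-neutral in $H$ (so $\textnormal{mult}(\theta,H\setminus y)=0$)'' replaced by ``$y$ is $\theta$-essential in $G$ (so $\textnormal{mult}(\theta,G\setminus y)=0$).''
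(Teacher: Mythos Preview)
Your proof is correct and is essentially identical to the paper's own argument: pick $y\in Y$, use $\theta$-essentiality to get $\textnormal{mult}(\theta,G\setminus y)=0$, then apply Theorem~\ref{B:T1} to $G\setminus y$ and $Y'=Y\setminus\{y\}$. Your observation that this is verbatim the tail of Lemma~\ref{BP:L8} with ``$\theta$-neutral'' replaced by ``$\theta$-essential'' is exactly right.
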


\begin {proof} Since $Y\neq \varnothing$, there is a $y\in Y$. Let $Y'=Y\setminus y$ and $G'=G\setminus y$. Note that  $\textnormal {mult} (\theta,G\setminus y)=0$ since $y$ is  $\theta$-essential in $G$. By Theorem \ref {B:T1}, $c_{\theta} (G'\setminus Y')\leq \vert Y'\vert$. Since $G\setminus Y=G'\setminus Y'$, we have $c_{\theta}(G\setminus Y)\leq \vert Y\vert-1$.
\end {proof}

In general the union or intersection of two $\theta$-barrier sets is not necessary a $\theta$-barrier set. In Figure 1, $X_2=\{u,v,w\}$ and $X_3=\{v,w,z\}$ are two $0$-barrier sets. But $X_2\cap X_3$ and $X_2\cup X_3$ are not a $0$-barrier set. However the intersection of two maximal $\theta$-barrier sets is a $\theta$-barrier set.

\begin {thm}\label {BP:T10} The intersection of two maximal $\theta$-barrier sets is a $\theta$-barrier set.
\end {thm}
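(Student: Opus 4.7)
The plan is to show $X := X_1 \cap X_2$ is a $\theta$-barrier by reducing to a componentwise claim and then inducting on $|V(G)|$. The first observation is that $X$ is $\theta$-extreme. Since $X_1$ is $\theta$-extreme by Lemma \ref{BP:L8a}, and since $G \setminus X_1$ can be obtained from $G \setminus X$ by deleting the $|X_1 \setminus X|$ vertices of $X_1 \setminus X$, iterating Lemma \ref{P:L2} gives $\textnormal{mult}(\theta, G \setminus X_1) \leq \textnormal{mult}(\theta, G \setminus X) + |X_1 \setminus X|$, which rearranges to $\textnormal{mult}(\theta, G \setminus X) \geq |X| + \textnormal{mult}(\theta, G)$. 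The matching upper bound in the other direction from Lemma \ref{P:L2} forces equality, so $X$ is $\theta$-extreme.

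Combining (\ref{ee}) with this extremeness, $X$ is a $\theta$-barrier precisely when $c_\theta(G \setminus X) = \textnormal{mult}(\theta, G \setminus X) = \sum_H \textnormal{mult}(\theta, H)$, where the sum runs over components $H$ of $G \setminus X$. Since $c_\theta$ counts only $\theta$-critical components (each of multiplicity $1$), this equality holds if and only if every component $H$ is either $\theta$-critical or has multiplicity $0$. I would therefore fix such a component $H$, set $A = V(H) \cap (X_1 \setminus X_2)$ and $B = V(H) \cap (X_2 \setminus X_1)$, and try to establish this dichotomy case-by-case on $A$ and $B$.

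When $A = \emptyset$, the component $H$ of $G \setminus X$ is already a component of $G \setminus X_1$, and Lemma \ref{BP:L8b} gives the desired dichotomy; the case $B = \emptyset$ is symmetric. Otherwise $A, B$ are both nonempty, and the heart of the argument is to show that $A$ and $B$ are each \emph{maximal} $\theta$-barriers in $H$ and then invoke induction on $|V(G)|$. That $A$ is a $\theta$-barrier in $H$ follows by expanding $c_\theta(G \setminus X_1) = |X_1| + \textnormal{mult}(\theta, G)$ as a sum over components of $G \setminus X$, rewriting the right-hand side via the extremeness of $X$, and noting that the Berge upper bound on each summand must then be tight. For maximality, if $A \cup \{v\}$ were a $\theta$-barrier in $H$ for some $v \in V(H) \setminus A$, then the component $K$ of $H \setminus A$ containing $v$ (which is a component of $G \setminus X_1$, hence $\theta$-critical or mult $0$) would have to satisfy $c_\theta(K \setminus v) \geq 1$; but Lemma \ref{BP:L9} (for $\theta$-critical $K$) and Lemma \ref{BP:L8} (for mult-$0$ $K$, using the maximality of $X_1$) both give $c_\theta(K \setminus v) \leq 0$, a contradiction. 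The induction then supplies that $A \cap B = \emptyset$ is a $\theta$-barrier in $H$, so $c_\theta(H) = \textnormal{mult}(\theta, H)$, and connectedness of $H$ forces this common value to be $0$ or $1$, completing the dichotomy.

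The main obstacle is the degenerate case $|V(H)| = |V(G)|$, which can only occur when $X = \emptyset$, $G$ is connected, $H = G$, and both $X_1$ and $X_2$ are nonempty—there the inductive appeal is circular. In this borderline configuration a direct argument seems necessary, most plausibly by combining the deficiency bounds of Lemmas \ref{BP:L8} and \ref{BP:L9} applied to the components of $G \setminus X_1$ with the way $X_2$ intersects those components (and symmetrically), to force $G$ itself to be $\theta$-critical or mult $0$. This is the step I expect to be the most delicate part of the argument.
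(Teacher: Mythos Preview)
Your componentwise reduction and the check that $A$ is a $\theta$-barrier in $H$ are sound, but the induction does not close. The degenerate case you flag ($X=\emptyset$, $G$ connected, so $H=G$) is not a boundary case but the entire content: there $A=X_1$ and $B=X_2$ are the original maximal $\theta$-barriers of $G$, so invoking the inductive hypothesis on $H$ is exactly invoking the theorem on $G$ itself, and nothing has been reduced. The ``direct argument'' you anticipate for this case---bounding via Lemmas~\ref{BP:L8} and~\ref{BP:L9} how $X_2$ meets the components of $G\setminus X_1$---is precisely the paper's proof, and once carried out it handles the general case uniformly with no induction needed. Concretely, the paper counts the $\theta$-critical components $G_i$ of $G\setminus X_1$ that avoid $X_2$: each such $G_i$ lies in some component $H_j$ of $G\setminus X_2$, and when $X_1$ meets $H_j$ the two lemmas cap the number of such $G_i$ inside $H_j$ by $|X_1\cap V(H_j)|-1$, while those $G_i$ lying in an $H_j$ disjoint from $X_1$ are already $\theta$-critical components of $G\setminus(X_1\cap X_2)$; summing and comparing with $k=|X_1|+\textnormal{mult}(\theta,G)$ forces $c_\theta(G\setminus(X_1\cap X_2))=|X_1\cap X_2|+\textnormal{mult}(\theta,G)$ directly.

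A smaller issue: your maximality argument for $A$ rules out only $\theta$-barriers of the form $A\cup\{v\}$, but a strictly larger $\theta$-barrier $A'$ need not contain any such set as a $\theta$-barrier. The clean fix is to note that any $\theta$-barrier $A'\supsetneq A$ in $H$ makes $X_1\cup(A'\setminus A)$ a $\theta$-barrier in $G$ strictly containing $X_1$ (since $c_\theta(H\setminus A')-c_\theta(H\setminus A)=|A'\setminus A|$), contradicting maximality of $X_1$.
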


\begin {proof} Let $X$ and $Y$ be two maximal $\theta$-barrier sets. Let $G_1,G_2,\dots, G_k$ be all the $\theta$-critical components of $G\setminus X$ and $H_1,H_2,\dots, H_m$ be all the components of $G\setminus Y$.  Note that $k=\vert X\vert+\textnormal {mult} (\theta,G)$. Let $X_i=X\cap V(H_i)$, $Y_i=Y\cap V(G_i)$ and $Z=X\cap Y$. By relabelling if necessary we may assume that $X_1,\dots, X_{m_1}\neq \varnothing$, $Y_1,\dots, Y_{k_1}\neq \varnothing$, but $X_{m_1+1}=\cdots=X_{m}=Y_{k_1+1}=\cdots=Y_k=\varnothing$, and also that $k_1\leq m_1$. Note that $G_{k_1+1},\dots, G_{k}$ are $\theta$-critical components in $(G\setminus X)\setminus Y$. So  each of them is contained in a component of $G\setminus Y$. Now let us count the number of $G_i$'s where $k_1+1\leq i\leq k$ that are contained in some $H_j$.

Suppose $m_1+1\leq j\leq m$. Then $H_j$ is a component in $(G\setminus X)\setminus Y$. So if $G_i\subseteq H_j$, we must have $G_i=H_j$. Furthermore $G_i$ is a component of $G\setminus Z$. By Theorem \ref {B:T1}, the number of such $G_i$'s is at most $c_{\theta} (G\setminus Z)\leq \vert Z\vert +\textnormal {mult} (\theta,G)$.

Suppose $1\leq j\leq m_1$. Let $G_{i_1},\dots, G_{i_t}$ be all the $G_i$'s that are contained in $H_j$. Then  $G_{i_1},\dots, G_{i_t}$ are $\theta$-critical components in $H_j\setminus X_j$. By Lemma \ref {BP:L8b}, $H_j$ is either  $\theta$-critical or $\textnormal {mult} (\theta,H)=0$. If $\textnormal {mult} (\theta,H)=0$, we have, by Lemma \ref {BP:L8}, $c_{\theta} (H_j\setminus X_j)\leq \vert X_j\vert-1$. If $H_i$ is $\theta$-critical, we have, by Lemma \ref {BP:L9}, $c_{\theta} (H_j\setminus X_j)\leq \vert X_j\vert-1$. Therefore in either cases, we have $t\leq \vert X_j\vert-1$.

The number of $G_i$'s where $k_1+1\leq i\leq k$ that are disjoint from $Y$ is at most
\begin {align}
c_{\theta} (G\setminus Z)+\sum_{j=1}^{m_1} (\vert X_j\vert-1) &\leq  \vert Z\vert +\textnormal {mult} (\theta,G)+\vert X\setminus Z\vert-m_1\notag\\
& =\vert X\vert +\textnormal {mult} (\theta,G)-m_1\notag\\
& =k-m_1\notag\\
& \leq k-k_1.\notag
\end {align}
Since this number is exactly $k-k_1$, we infer that equality must hold throughout. Hence $c_{\theta} (G\setminus Z)=\vert Z\vert +\textnormal {mult} (\theta,G)$ and $Z$ is a $\theta$-barrier set.
\end {proof}

\section{Characterizations of $A_{\theta}(G)$}

A characterization of $A_{\theta}(G)$  is that it is the minimal (inclusionwise) $\theta$-barrier set (see Theorem \ref {BP:T13b}). Furthermore if $N_{\theta}(G)=\varnothing$, we have another characterization of $A_{\theta}(G)$, that is, it is  the intersection of all maximal $\theta$-barrier sets in $G$ (see Theorem \ref {BP:T15}).

\begin {lm}\label {BP:L11} If $X$ is a $\theta$-barrier or a $\theta$-extreme set then $X\subseteq A_{\theta}(G)\cup P_{\theta}(G)$.
\end {lm}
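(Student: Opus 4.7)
The plan is to reduce immediately to the $\theta$-extreme case (since Lemma \ref{BP:L8a} gives that every $\theta$-barrier set is $\theta$-extreme), then proceed by contradiction, assuming some $u \in X$ fails to be $\theta$-positive.

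Specifically, let $X$ be $\theta$-extreme, so $\textnormal{mult}(\theta, G\setminus X) = \textnormal{mult}(\theta, G) + |X|$. Suppose for contradiction that $X \not\subseteq A_\theta(G) \cup P_\theta(G) = Q_\theta(G)$; then $X$ contains a vertex $u$ that is either $\theta$-essential or $\theta$-neutral. In both cases, Definition \ref{P:D3} gives
\begin{equation}
\textnormal{mult}(\theta, G\setminus u) \leq \textnormal{mult}(\theta, G).\notag
\end{equation}

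Now write $Y = X \setminus \{u\}$, so $G\setminus X = (G\setminus u)\setminus Y$. Applying Lemma \ref{P:L2} one vertex at a time to the graph $G\setminus u$, we remove the $|Y| = |X|-1$ vertices of $Y$ and obtain
\begin{equation}
\textnormal{mult}(\theta, G\setminus X) \leq \textnormal{mult}(\theta, G\setminus u) + |X| - 1 \leq \textnormal{mult}(\theta, G) + |X| - 1,\notag
\end{equation}
which contradicts the $\theta$-extremeness of $X$. Hence every vertex of $X$ must be $\theta$-positive, i.e.\ $X \subseteq Q_\theta(G) = A_\theta(G) \cup P_\theta(G)$.

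There is no real obstacle; the argument rests entirely on the two basic ingredients already in hand, namely the interlacing bound of Lemma \ref{P:L2} (telescoped along a one-vertex-at-a-time deletion) and the fact that a $\theta$-barrier set is automatically $\theta$-extreme (Lemma \ref{BP:L8a}). The only point worth being careful about is to ensure we do the deletion sequence in the right order so as to single out the offending vertex $u$; writing $G\setminus X = (G\setminus u)\setminus Y$ handles this cleanly.
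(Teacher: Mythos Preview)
Your proof is correct and follows essentially the same approach as the paper: reduce to the $\theta$-extreme case via Lemma~\ref{BP:L8a}, then show each element of $X$ must be $\theta$-positive. The only cosmetic difference is that the paper invokes Lemma~\ref{BP:L3} to conclude directly that each singleton $\{x\}\subseteq X$ is $\theta$-extreme (hence $x$ is $\theta$-positive), whereas you re-derive the needed inequality by iterating Lemma~\ref{P:L2} along $Y=X\setminus\{u\}$; these amount to the same computation.
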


\begin {proof} By Lemma \ref {BP:L8a}, we may assume $X$ is a $\theta$-extreme. Let $x\in X$. By Lemma \ref {BP:L3}, $\{x\}$ is a $\theta$-extreme set. Therefore $\textnormal {mult} (\theta, G\setminus x)=\textnormal {mult} (\theta, G)+1$ and $x$ is $\theta$-positive. So $x\in A_{\theta}(G)\cup P_{\theta}(G)$ and $X\subseteq A_{\theta}(G)\cup P_{\theta}(G)$.
\end {proof}

\begin {lm}\label {BP:L13} Let $X$ be a $\theta$-barrier set. If $X\subseteq A_{\theta}(G)$ then $X=A_{\theta}(G)$.
\end {lm}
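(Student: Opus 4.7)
The plan is to argue by contradiction: suppose $X \subsetneq A_{\theta}(G)$ is a $\theta$-barrier set, and derive a contradiction by producing a vertex in $G\setminus X$ that would have to be simultaneously $\theta$-essential and not $\theta$-essential. If $A_{\theta}(G) = \varnothing$ the statement is trivial (the empty set is the only $\theta$-barrier contained in it), so assume $u \in A_{\theta}(G) \setminus X$.

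First I would pin down what $G\setminus X$ looks like with respect to the $\theta$-classification. Since $u$ is $\theta$-special in $G$, there is a $\theta$-essential vertex $v \in D_{\theta}(G)$ adjacent to $u$ in $G$. Since $X \subseteq A_{\theta}(G)$, I can iterate Theorem~\ref{P:T5} one vertex at a time: each removed vertex is $\theta$-positive, so $\theta$ remains a root at every stage and the theorem continues to apply. The net effect is $D_{\theta}(G\setminus X) = D_{\theta}(G)$ and $A_{\theta}(G\setminus X) = A_{\theta}(G)\setminus X$. In particular, $v$ is still $\theta$-essential in $G\setminus X$, and $u$ is still $\theta$-special in $G\setminus X$ (hence \emph{not} $\theta$-essential there). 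Also, neither $u$ nor $v$ lies in $X$ (since $X \subseteq A_{\theta}(G)$ is disjoint from $D_{\theta}(G)$), so the edge $uv$ survives and $u,v$ lie in a common component $H$ of $G\setminus X$.

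Next I would use Lemma~\ref{BP:L8b}: since $X$ is a $\theta$-barrier set, every component of $G\setminus X$ is either $\theta$-critical or has $\m(\theta,\cdot) = 0$. I want to rule out the second possibility for $H$. By multiplicativity of the matching polynomial across components (Theorem~\ref{I:T2}(a)),
\begin{equation}
\m(\theta, G\setminus X\setminus v) - \m(\theta, G\setminus X) = \m(\theta, H\setminus v) - \m(\theta, H).\notag
\end{equation}
The left side equals $-1$ because $v$ is $\theta$-essential in $G\setminus X$, so $\m(\theta,H\setminus v) = \m(\theta,H) - 1 \geq 0$, forcing $\m(\theta, H) \geq 1$. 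By Lemma~\ref{BP:L8b}, $H$ must be $\theta$-critical.

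The contradiction then follows quickly: every vertex of a $\theta$-critical component $H$ is $\theta$-essential in $H$, and by the same multiplicativity argument this lifts to being $\theta$-essential in $G\setminus X$. Applying this to $u \in V(H)$ contradicts $u \in A_{\theta}(G\setminus X)$. Hence no such proper subset $X$ exists, giving $X = A_{\theta}(G)$. The only slightly delicate step is the iteration of Theorem~\ref{P:T5} in the second paragraph, but as noted the $\theta$-positivity of $\theta$-special vertices makes this routine.
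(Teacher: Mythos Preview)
Your proof is correct and uses the same two ingredients as the paper: the iteration of Theorem~\ref{P:T5} to obtain $A_{\theta}(G\setminus X)=A_{\theta}(G)\setminus X$, and Lemma~\ref{BP:L8b} on the component structure of $G\setminus X$. The paper's version is simply more direct: rather than chasing a particular $u$ and its neighbour $v$ into a specific component, it observes at once that every component of $G\setminus X$ is either $\theta$-critical (all vertices $\theta$-essential) or has $\m(\theta,\cdot)=0$ (no $\theta$-essential vertices, hence no $\theta$-special ones), so $A_{\theta}(G\setminus X)=\varnothing$ immediately.
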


\begin {proof} Note that $c_{\theta} (G\setminus X)=\textnormal {mult} (\theta, G)+\vert X\vert$. By Lemma \ref  {BP:L8b},  we conclude that $A_{\theta}(G\setminus X)=\varnothing$. By Theorem \ref {P:T5}, $A_{\theta}(G\setminus X)=A_{\theta}(G)\setminus X$. Hence $X=A_{\theta}(G)$.
\end {proof}

We shall require the following result of Godsil \cite {G}.

\begin {thm}\label {G:4.2} \textnormal {(Theorem 4.2 of \cite {G})}
Let $\theta$ be a root of $\mu(G,x)$ with non-zero multiplicity $k$
and let $u$ be a $\theta$-positive vertex in $G$. Then
\begin {itemize}
\item [(a)] if $v$ is $\theta$-essential in $G$ then it
is $\theta$-essential in $G \setminus u$;
\item [(b)] if $v$ is $\theta$-positive in $G$ then it
is $\theta$-essential or $\theta$-positive in $G \setminus u$;
\item [(c)] if $u$ is $\theta$-neutral in $G$ then it is
$\theta$-essential or $\theta$-neutral in $G \setminus u$.
\end {itemize}
\end {thm}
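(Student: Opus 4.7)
The plan is to analyze how the $\theta$-multiplicity of $G\setminus uv$ is constrained by those of $G$, $G\setminus u$, and $G\setminus v$. Write $a=\textnormal{mult}(\theta,G)=k$, $b=\textnormal{mult}(\theta,G\setminus u)=k+1$ (since $u$ is $\theta$-positive), $c=\textnormal{mult}(\theta,G\setminus v)$, and $d=\textnormal{mult}(\theta,G\setminus uv)$. Applying Lemma \ref{P:L2} in two ways---removing $v$ from $G\setminus u$, and removing $u$ from $G\setminus v$---yields the two windows $d\in\{k,k+1,k+2\}$ and $d\in\{c-1,c,c+1\}$.

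For part (a), $v$ being $\theta$-essential in $G$ means $c=k-1$, so the two windows intersect only in $\{k\}$; this forces $d=k$ and hence $v$ is $\theta$-essential in $G\setminus u$ (since $b-d=1$). For part (c), $v$ being $\theta$-neutral in $G$ gives $c=k$, so $d\in\{k,k+1\}$, and $v$ is either $\theta$-essential or $\theta$-neutral in $G\setminus u$.

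Part (b) is the main obstacle, because Lemma \ref{P:L2} alone is insufficient: when $c=k+1$ both windows equal $\{k,k+1,k+2\}$ and the troublesome case $d=k+1$ (that is, $v$ becoming $\theta$-neutral in $G\setminus u$) is a priori allowed. To rule it out, I would invoke Godsil's matching-polynomial identity
\[
\mu(G\setminus u,x)\,\mu(G\setminus v,x)-\mu(G,x)\,\mu(G\setminus uv,x)=\sum_{P\in\mathcal{P}(u,v)}\mu(G\setminus V(P),x)^{2},
\]
where $\mathcal{P}(u,v)$ is the set of $u$-$v$ paths in $G$. Since the right-hand side is a sum of squares of real polynomials, its order of vanishing at the real number $\theta$ is either $+\infty$ (which happens only when $\mathcal{P}(u,v)=\varnothing$, i.e., $u$ and $v$ lie in different components of $G$, a case handled directly by the multiplicativity of $\mu$ from part (a) of Theorem \ref{I:T2}) or an even integer (the squares cannot cancel on a neighbourhood of $\theta$). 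If $d=k+1$, the two summands on the left-hand side vanish at $\theta$ to orders $b+c=2k+2$ and $a+d=2k+1$ respectively; these being unequal forces the left-hand side to have order exactly $2k+1$ at $\theta$, which is odd and finite. This contradicts the right-hand side having even or infinite order, so $d\neq k+1$, establishing (b).
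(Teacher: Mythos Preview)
The paper does not supply its own proof of this statement: it is quoted verbatim as Theorem~4.2 of Godsil~\cite{G} and used as a black box in the proof of Lemma~\ref{BP:L13a}. There is therefore nothing in the present paper to compare your argument against.

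That said, your argument is correct. Parts (a) and (c) are immediate from the two applications of Lemma~\ref{P:L2} you describe. For part~(b), the identity you invoke,
\[
\mu(G\setminus u,x)\,\mu(G\setminus v,x)-\mu(G,x)\,\mu(G\setminus uv,x)=\sum_{P\in\mathcal{P}(u,v)}\mu(G\setminus V(P),x)^{2},
\]
is indeed valid for arbitrary graphs (it is Lemma~2.2 of~\cite{G} and Corollary~1.3 on p.~3 of~\cite{G0}), and your parity argument is sound: the right-hand side, being a sum of squares of real polynomials, is non-negative on $\mathbb{R}$ and hence vanishes to even (or infinite) order at the real number $\theta$, while the assumption $d=k+1$ forces the left-hand side to vanish to the odd order $2k+1$. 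The disconnected case is dispatched cleanly by multiplicativity. This is in fact precisely the route Godsil takes in~\cite{G}, so your proof coincides with the original.

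One typographical remark: the paper's statement of part~(c) reads ``if $u$ is $\theta$-neutral'', which is clearly a misprint for ``if $v$ is $\theta$-neutral''; you have interpreted it correctly.
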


\begin {lm}\label {BP:L13a} Let $u\in P_{\theta}(G)$. Then $A_{\theta}(G)\subseteq A_{\theta}(G\setminus u)$.
\end {lm}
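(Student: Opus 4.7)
The plan is to show two things for every $v\in A_{\theta}(G)$: (1) $v$ still exists as a vertex of $G\setminus u$ and is not $\theta$-essential there, and (2) $v$ has a $\theta$-essential neighbour in $G\setminus u$.

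First I would record the structural information about $u$ and $v$ that comes for free from the definitions. Since $v\in A_{\theta}(G)$, by definition $v$ is $\theta$-special, so $v$ is not $\theta$-essential but has a $\theta$-essential neighbour, call it $w$; moreover, by the remark preceding Definition \ref{P:D4} (using Corollary 4.3 of \cite{G}), $v$ is $\theta$-positive. On the other hand $u\in P_{\theta}(G)=Q_{\theta}(G)\setminus A_{\theta}(G)$, so $u$ is $\theta$-positive and not $\theta$-special; since $u$ is not $\theta$-essential either, the definition of $\theta$-special forces $u$ to have no $\theta$-essential neighbours. In particular $u\neq v$, $u\neq w$, and $uw$ is not an edge of $G$, so $w$ remains a neighbour of $v$ in $G\setminus u$.

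For step (1) I would compute multiplicities in two ways. Because $v\in A_{\theta}(G)$, Theorem \ref{P:T5}(ii) gives $P_{\theta}(G\setminus v)=P_{\theta}(G)$, so $u\in P_{\theta}(G\setminus v)$. Consequently
\[
\m(\theta,G\setminus v\setminus u)=\m(\theta,G\setminus v)+1=\m(\theta,G)+2.
\]
On the other side, $u\in P_{\theta}(G)$ yields $\m(\theta,G\setminus u)=\m(\theta,G)+1$. Equating the two expressions for $\m(\theta,G\setminus u\setminus v)$ (which equals $\m(\theta,G\setminus v\setminus u)$) shows $\m(\theta,G\setminus u\setminus v)=\m(\theta,G\setminus u)+1$, i.e.\ $v$ is $\theta$-positive in $G\setminus u$, hence not $\theta$-essential there.

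For step (2), I invoke Theorem \ref{G:4.2}(a) with the $\theta$-positive vertex $u$: since $w$ is $\theta$-essential in $G$, it is also $\theta$-essential in $G\setminus u$. Combined with the fact from the first paragraph that $vw$ is still an edge of $G\setminus u$, this shows $v$ is adjacent in $G\setminus u$ to the $\theta$-essential vertex $w$. Together with step (1), $v$ is $\theta$-special in $G\setminus u$, i.e.\ $v\in A_{\theta}(G\setminus u)$. The only subtlety — and what I would flag as the crux — is the observation that $u$ being in $P_{\theta}(G)$ rather than in $A_{\theta}(G)$ is precisely what guarantees that no $\theta$-essential neighbour of any $v\in A_{\theta}(G)$ is lost when $u$ is deleted; everything else is a direct application of Theorems \ref{P:T5} and \ref{G:4.2}.
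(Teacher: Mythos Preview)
Your proof is correct and follows essentially the same route as the paper's: both hinge on Theorem~\ref{P:T5} (applied with $v\in A_\theta(G)$) to obtain $u\in P_\theta(G\setminus v)$ and hence $\m(\theta,G\setminus uv)=\m(\theta,G)+2$, together with Theorem~\ref{G:4.2}(a) to keep $w$ $\theta$-essential in $G\setminus u$. The only cosmetic difference is that the paper first invokes Theorem~\ref{G:4.2}(b) to narrow $v$'s status in $G\setminus u$ to ``$\theta$-positive or $\theta$-essential'' and then rules out the latter by contradiction via the same multiplicity count, whereas you compute $v$'s $\theta$-positivity in $G\setminus u$ directly; your observation that $uw\notin E(G)$ is true but not actually needed, since $u\neq v$ and $u\neq w$ already suffice for $vw$ to survive in $G\setminus u$.
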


\begin {proof} If $A_{\theta}(G)=\varnothing$, then we are done. Suppose $A_{\theta}(G)\neq \varnothing$. Let $v\in  A_{\theta}(G)$. Then $v$ is adjacent to a $\theta$-essential vertex $w$. By Theorem \ref{G:4.2}, $w$ is $\theta$-essential in $G\setminus u$ and $v$ is either $\theta$-positive or $\theta$-essential in $G\setminus u$. Suppose $v$ is $\theta$-essential in $G\setminus u$. Then $\textnormal {mult} (\theta, G\setminus uv)=\textnormal {mult} (\theta, G)$. By Theorem \ref {P:T5}, $u\in P_{\theta}(G)=P_{\theta}(G\setminus v)$. Since $v$ is $\theta$-special in $G$, $v$ is $\theta$-positive in $G$ (see Corollary 4.3 of \cite {G}). So $\textnormal {mult} (\theta, G\setminus uv)=\textnormal {mult} (\theta, G)+2$, a contradiction. Therefore $v$ is $\theta$-positive  in $G\setminus u$. Since $v$ is adjacent to $w$, we must have $v\in A_{\theta}(G\setminus u)$. Hence   $A_{\theta}(G)\subseteq A_{\theta}(G\setminus u)$.
\end {proof}

\begin {thm}\label {BP:T13b} Let $X$ be a $\theta$-barrier set in $G$. Then $A_{\theta}(G)\subseteq X$. In particular, $A_{\theta}(G)$ is the minimal $\theta$-barrier set.
\end {thm}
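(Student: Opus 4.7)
The plan is to argue by induction on $|V(G)|$; the base case $|V(G)|=0$ is vacuous, and the natural reduction when $X$ is nonempty is to delete some $w\in X$ and pass to the smaller graph $G\setminus w$. In the inductive step I would split into two cases according to whether $X=\emptyset$ or not.

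Suppose first that $X=\emptyset$. Then $X$ being a $\theta$-barrier means $c_\theta(G)=\textnormal{mult}(\theta,G)$. Combining this with Theorem~\ref{I:T2}(a), which gives $\textnormal{mult}(\theta,G)=\sum_H\textnormal{mult}(\theta,H)$ summed over components $H$ of $G$, and the observation that each $\theta$-critical component contributes exactly $1$ to both sides, the equality forces every component $H$ of $G$ to be either $\theta$-critical or to satisfy $\textnormal{mult}(\theta,H)=0$. Because edges never cross components and because the essentiality status of a vertex $u\in V(H)$ in $G$ coincides with its status in $H$, in a $\theta$-critical component every vertex is $\theta$-essential (hence not $\theta$-special), while in a $\textnormal{mult}=0$ component no vertex is $\theta$-essential (else $\textnormal{mult}(\theta,H\setminus u)=-1$), so no vertex of $H$ is adjacent to a $\theta$-essential vertex. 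Thus $A_\theta(G)=\emptyset\subseteq X$.

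Suppose instead $X\neq\emptyset$. Pick any $w\in X$. By Lemma~\ref{BP:L11}, $w\in A_\theta(G)\cup P_\theta(G)$, and by Lemma~\ref{BP:L4}, $X\setminus\{w\}$ is a $\theta$-barrier in $G\setminus w$, so the inductive hypothesis applied to the smaller graph $G\setminus w$ yields $A_\theta(G\setminus w)\subseteq X\setminus\{w\}$. If $w\in A_\theta(G)$, then Theorem~\ref{P:T5}(iv) gives $A_\theta(G\setminus w)=A_\theta(G)\setminus\{w\}$, so $A_\theta(G)\setminus\{w\}\subseteq X\setminus\{w\}$ and hence $A_\theta(G)\subseteq X$. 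If instead $w\in P_\theta(G)$, then Lemma~\ref{BP:L13a} gives $A_\theta(G)\subseteq A_\theta(G\setminus w)\subseteq X\setminus\{w\}\subseteq X$.

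I expect the main obstacle to be the $X=\emptyset$ case, since that requires unpacking Theorem~\ref{I:T2}(a) to deduce the forced component structure and then checking that neither kind of component can harbour a $\theta$-special vertex; the reduction step is otherwise a clean application of the earlier lemmas. The ``in particular'' assertion then follows at once: $A_\theta(G)$ is itself a $\theta$-barrier by Corollary~\ref{P:C7}(ii), and by the main statement it is contained in every $\theta$-barrier, so it is the minimum $\theta$-barrier under inclusion.
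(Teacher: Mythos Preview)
Your argument is correct. It differs from the paper's proof mainly in the choice of induction parameter and in how the base case is handled. The paper inducts on $|X\cap P_\theta(G)|$: when this is zero, $X\subseteq A_\theta(G)$ and Lemma~\ref{BP:L13} (which rests on Lemma~\ref{BP:L8b}) immediately gives $X=A_\theta(G)$; for the step, one always removes a vertex of $X\cap P_\theta(G)$ and invokes Lemma~\ref{BP:L13a} to see that $|X'\cap P_\theta(G')|$ strictly drops. You instead induct on $|V(G)|$, handle $X=\varnothing$ by a direct component analysis (which is really an inline proof of the $X=\varnothing$ case of Lemma~\ref{BP:L8b}), and in the step remove an arbitrary $w\in X$, splitting on whether $w\in A_\theta(G)$ (Theorem~\ref{P:T5}(iv)) or $w\in P_\theta(G)$ (Lemma~\ref{BP:L13a}). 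Both routes rely on the same core ingredients---Lemmas~\ref{BP:L4}, \ref{BP:L11}, \ref{BP:L13a} and Theorem~\ref{P:T5}---so the difference is organizational: the paper's parameter keeps the base case to a one-line appeal to Lemma~\ref{BP:L13}, while your version is a touch longer but more self-contained, since it does not need Lemma~\ref{BP:L13} (or Lemma~\ref{BP:L8b}) as a separate statement.
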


\begin {proof} By Lemma \ref {BP:L11}, $X\subseteq A_{\theta}(G)\cup P_{\theta}(G)$. We shall prove the result by induction on $\vert X\cap P_{\theta}(G)\vert $. Suppose $\vert X\cap P_{\theta}(G)\vert=0$. Then $X\subseteq A_{\theta}(G)$ and by Lemma \ref {BP:L13}, $X=A_{\theta}(G)$. Suppose $\vert X\cap P_{\theta}(G)\vert\geq 1$. We may assume that if $X'$ is a $\theta$-barrier set in $G'$ with $\vert X'\cap P_{\theta}(G')\vert<\vert X\cap P_{\theta}(G)\vert$, then $A_{\theta}(G')\subseteq X'$.

Let $x\in X\cap P_{\theta}(G)$. By Lemma \ref {BP:L4}, $X'=X\setminus x$ is a $\theta$-barrier set in $G'=G\setminus x$. By Lemma \ref {BP:L11} and Lemma \ref {BP:L13a}, we have $X'\subseteq A_{\theta}(G')\cup P_{\theta}(G')$ and  $A_{\theta}(G)\subseteq A_{\theta}(G')$. Therefore $\vert X'\cap P_{\theta}(G')\vert<\vert X\cap P_{\theta}(G)\vert$. By induction $A_{\theta}(G')\subseteq X'$. Hence  $A_{\theta}(G)\subseteq X$.
\end {proof}

In general, $A_{\theta}(G)$ is not the intersection of all maximal $\theta$-barrier sets in $G$. For instance, in Figure 2, $\textnormal {mult} (\sqrt {3},G)=0$ and $A_{\sqrt {3}}(G)=\varnothing$. Now $\{u\}$ is the only maximal $\sqrt {3}$-barrier set. But $A_{\sqrt {3}}(G)\neq \{u\}$. However we can show that $A_{\theta}(G)$ is  the intersection of all maximal $\theta$-barrier sets in $G$ if $N_{\theta}(G)=\varnothing$.

\begin {center}
\scalebox{1}[1]{\includegraphics{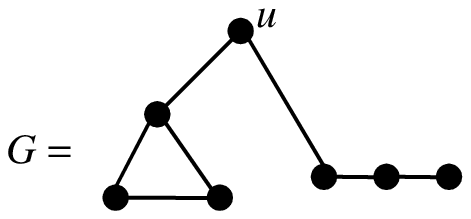}}

\textnormal {Figure 2.}
\end {center}

\begin {thm}\label {BP:T15} Suppose $N_{\theta}(G)=\varnothing$. Then $A_{\theta}(G)$ is the intersection of all maximal $\theta$-barrier sets in $G$.
\end {thm}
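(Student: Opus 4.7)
The plan is to prove the two inclusions. The direction $A_{\theta}(G)\subseteq \bigcap M$ over all maximal $\theta$-barriers is immediate from Theorem \ref{BP:T13b}, so the real work is the reverse: given $v\in V(G)\setminus A_{\theta}(G)$, I must exhibit a maximal $\theta$-barrier of $G$ that misses $v$. The hypothesis $N_{\theta}(G)=\varnothing$ places $v$ in $D_{\theta}(G)\cup P_{\theta}(G)$. If $v\in D_{\theta}(G)$, Lemma \ref{BP:L11} says no $\theta$-barrier whatsoever can contain $v$, and any maximal $\theta$-barrier extending the $\theta$-barrier $A_{\theta}(G)$ does the job.

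The main case is $v\in P_{\theta}(G)$. Let $H$ be the component of $G\setminus A_{\theta}(G)$ containing $v$. Since $D_{\theta}(G)$ consists of the $\theta$-critical components of $G\setminus A_{\theta}(G)$ (Corollary \ref{P:C7}(iv)) and $v\notin D_{\theta}(G)$, the component $H$ is non-critical, so $\m(\theta,H)=0$. I first show that every $w\in V(H)$ is $\theta$-positive in $H$. Under $N_{\theta}(G)=\varnothing$ every such $w$ lies in $P_{\theta}(G)$; comparing $\m(\theta,G\setminus A_{\theta}(G)\setminus w)$ computed by the product formula on the components of $G\setminus A_{\theta}(G)$ with the same quantity obtained by first removing $w$ from $G$ and then iteratively peeling off the vertices of $A_{\theta}(G)\subseteq A_{\theta}(G\setminus w)$ (using Lemma \ref{BP:L13a} and Theorem \ref{P:T5}) yields $\m(\theta,H\setminus w)=1$, so $w$ is $\theta$-positive in $H$.

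The heart of the argument is the following lemma about $H$: if a connected graph $H$ satisfies $\m(\theta,H)=0$ and has every vertex $\theta$-positive, then for each $v\in V(H)$ there is a maximal $\theta$-barrier of $H$ avoiding $v$. The key move is a swap. Since $\m(\theta,H\setminus v)=1$, Corollary \ref{P:C7}(ii) applied to $H\setminus v$ forces $D_{\theta}(H\setminus v)\neq\varnothing$; pick $u\in D_{\theta}(H\setminus v)$, so $\m(\theta,H\setminus u\setminus v)=0$. Since $u$ is $\theta$-positive in $H$, also $\m(\theta,H\setminus u)=1$, and this forces $v\in D_{\theta}(H\setminus u)$. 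Set $M_{0}=A_{\theta}(H\setminus u)\cup\{u\}$; Corollary \ref{P:C7}(ii) applied to $H\setminus u$ gives $c_{\theta}(H\setminus M_{0})=|A_{\theta}(H\setminus u)|+1=|M_{0}|$, so $M_{0}$ is a $\theta$-barrier of $H$. Extend $M_{0}$ to any maximal $\theta$-barrier $M^{*}$ of $H$. If $v$ were in $M^{*}$, then $v\neq u$ and Lemma \ref{BP:L4} makes $M^{*}\setminus\{u\}$ a $\theta$-barrier of $H\setminus u$ containing $v$; but Lemma \ref{BP:L11} forces $M^{*}\setminus\{u\}\subseteq A_{\theta}(H\setminus u)\cup P_{\theta}(H\setminus u)$, contradicting $v\in D_{\theta}(H\setminus u)$. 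Hence $v\notin M^{*}$.

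Finally I lift back to $G$. Let $M_{H}^{*}$ be the maximal $\theta$-barrier of $H$ avoiding $v$ just produced, and pick any maximal $\theta$-barrier $M_{H'}^{*}$ in each other non-critical component $H'$ of $G\setminus A_{\theta}(G)$ (which exists since $\varnothing$ is a $\theta$-barrier of $H'$ when $\m(\theta,H')=0$). Setting $M=A_{\theta}(G)\cup M_{H}^{*}\cup\bigcup_{H'\neq H}M_{H'}^{*}$, a direct count using Corollary \ref{P:C7}(ii) shows $c_{\theta}(G\setminus M)=|M|+\m(\theta,G)$, so $M$ is a $\theta$-barrier. Maximality follows because any extending vertex $w$ must lie in $P_{\theta}(G)$ (by Lemma \ref{BP:L11}), hence in some non-critical component $H''$, and then $M_{H''}^{*}\cup\{w\}$ would be a $\theta$-barrier of $H''$ violating the maximality of $M_{H''}^{*}$. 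By construction $v\notin M$. The most delicate point is the third paragraph: finding the right swap-partner $u\in D_{\theta}(H\setminus v)$ so that $u$ and $v$ trade roles, which is precisely where ``every vertex of $H$ is $\theta$-positive in $H$'' (and ultimately $N_{\theta}(G)=\varnothing$) is essential.
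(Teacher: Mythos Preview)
Your argument is correct and follows the same core strategy as the paper: for $v\in P_{\theta}(G)$ lying in the non-critical component $H$ of $G\setminus A_{\theta}(G)$, find a vertex $u\in V(H)$ with $\m(\theta,H\setminus uv)=0$, build the $\theta$-barrier $\{u\}\cup A_{\theta}(H\setminus u)$, and then pass to a maximal $\theta$-barrier that must miss $v$ because $v\in D_{\theta}(H\setminus u)$. The only real differences are cosmetic: the paper locates the swap partner $y$ as a \emph{neighbour} of $v$ via the recurrence in Theorem~\ref{I:T2}(c), whereas you pick any $u\in D_{\theta}(H\setminus v)$ via Corollary~\ref{P:C7}; and the paper extends the resulting barrier directly in $G$ and uses $v\in D_{\theta}(G\setminus Y)$ together with Lemma~\ref{BP:L11}, whereas you first pass to a maximal barrier inside each non-critical component and then assemble.

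One small point: your maximality argument in the last paragraph implicitly assumes that if $M$ is not maximal then some single-vertex extension $M\cup\{w\}$ is again a $\theta$-barrier, which is not immediate. The clean fix is to take any $\theta$-barrier $M'\supsetneq M$, observe via Lemma~\ref{BP:L11} and Corollary~\ref{P:C7}(iv) that $M'\setminus A_{\theta}(G)$ meets only the non-critical components, and then use Theorem~\ref{B:T1} on each component $H''$ to deduce $c_{\theta}(H''\setminus M'_{H''})=|M'_{H''}|$ with $M'_{H''}\supseteq M^{*}_{H''}$, contradicting the maximality of $M^{*}_{H''}$.
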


\begin {proof} By Theorem \ref {BP:T13b}, $A_{\theta}(G)$ is contained in the intersection of all maximal $\theta$-barriers in $G$. It is sufficient to show that for each $x\in V(G)\setminus A_{\theta}(G)$ there is a maximal barrier that does not contain $x$. If $x\in D_{\theta}(G)$, by Lemma \ref {BP:L11}, $x$ is not contained in any $\theta$-barriers and thus any maximal $\theta$-barriers. Suppose $x\in P_{\theta}(G)$. Then $x$ is contained in a component $H$ in $G\setminus A_{\theta}(G)$ with $\textnormal {mult} (\theta, H)= 0$. Note that $\vert V(H)\vert\geq 2$, for $x\in P_{\theta}(G)=P(G\setminus A_{\theta}(G))$ and $\textnormal {mult} (\theta, H\setminus x)=1$ (see Theorem \ref {P:T5}). By (c) of Theorem \ref {I:T2} and the fact that $\textnormal {mult} (\theta, H)= 0$, we deduce that there is a vertex $y\in V(H\setminus x)$ for which $\textnormal {mult} (\theta, H\setminus xy)=0$. Now $y\in P_{\theta}(G)$ for $N_{\theta}(G)=\varnothing$. Furthermore $x$ is $\theta$-essential in $H\setminus y$. Therefore $x\notin A_{\theta}(H\setminus y)$ and by (ii) of Corollary \ref {P:C7}, $c_{\theta} ((H\setminus y)\setminus A_{\theta}(H\setminus y))=\vert A_{\theta}(H\setminus y)\vert +1$. Hence
\begin {align}
c_{\theta} (G\setminus (A_{\theta}(G)\cup \{y\}\cup A_{\theta}(H\setminus y))) & =c_{\theta} (G\setminus A_{\theta}(G))+c_{\theta} ((H\setminus y)\setminus A_{\theta}(H\setminus y))\notag\\
& =\vert A_{\theta}(G)\vert+\textnormal {mult} (\theta, G)+\vert A_{\theta}(H\setminus y)\vert +1\notag\\
&=\vert A_{\theta}(G)\cup \{y\}\cup A_{\theta}(H\setminus y)\vert+\textnormal {mult} (\theta, G),\notag
\end {align}
and so $A_{\theta}(G)\cup \{y\}\cup A_{\theta}(H\setminus y)$ is a $\theta$-barrier set not containing $x$. Let $Z$ be a maximal $\theta$-barrier set containing $Y=A_{\theta}(G)\cup \{y\}\cup A_{\theta}(H\setminus y)$. By Lemma \ref {BP:L4}, $Z\setminus Y$ is a $\theta$-barrier set in $G\setminus Y$. Using Theorem \ref {P:T5} and the fact that $x$ is $\theta$-essential in $H\setminus y$, we can deduce that $x\in D_{\theta} (G\setminus Y)$. By Lemma \ref {BP:L11}, we conclude that $x\notin Z\setminus Y$ and hence $x\notin Z$. The proof of the theorem is completed.
\end {proof}

Since $N_0(G)=\varnothing$, by Theorem \ref {BP:T15} and Proposition \ref {BP:P3b}, we deduce the following classical result.

\begin {cor}\label {BP:C16}  \textnormal {(Theorem 3.3.15 of \cite {Lo})}
$A_{0}(G)$ is the intersection of all maximal barrier sets in $G$.
\end {cor}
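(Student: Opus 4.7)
The plan is to derive this corollary by combining Theorem \ref{BP:T15} (applied at $\theta=0$) with Proposition \ref{BP:P3b}, after verifying that the two notions of ``maximal barrier set'' at hand actually coincide.

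First I would invoke Theorem \ref{BP:T15} in the special case $\theta = 0$. This application is legitimate because the paper observed earlier that $N_0(G)=\varnothing$. Thus $A_0(G)$ equals the intersection of all maximal $0$-barrier sets in $G$. The task then reduces to showing
\begin{equation*}
\{\text{maximal $0$-barrier sets of $G$}\} = \{\text{maximal barrier sets of $G$}\},
\end{equation*}
as families, so that the two intersections coincide.

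For one containment, Proposition \ref{BP:P3b} already states that every maximal barrier set is a maximal $0$-barrier set. For the reverse containment, I would argue as follows: let $Y$ be a maximal $0$-barrier set. By Proposition \ref{BP:P3}, $Y$ is a barrier set. If $Y$ were not maximal as a barrier set, then $Y$ would be properly contained in some maximal barrier set $Z$, and Proposition \ref{BP:P3b} would make $Z$ a maximal $0$-barrier set, contradicting the maximality of $Y$ among $0$-barrier sets. Hence $Y$ is a maximal barrier set.

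Combining the two containments, the families of maximal barrier sets and maximal $0$-barrier sets in $G$ are identical, so Theorem \ref{BP:T15} at $\theta=0$ yields exactly the statement of the corollary. The argument is essentially bookkeeping: the real content sits in Theorem \ref{BP:T15} and Proposition \ref{BP:P3b}, so I do not anticipate any serious obstacle.
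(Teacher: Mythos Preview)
Your proposal is correct and follows essentially the same route as the paper, which simply cites $N_0(G)=\varnothing$, Theorem~\ref{BP:T15}, and Proposition~\ref{BP:P3b}; you have merely spelled out the step (using Proposition~\ref{BP:P3} as well) that the families of maximal barrier sets and maximal $0$-barrier sets coincide, which is exactly the bookkeeping the paper leaves implicit.
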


\end{document}